\newtheorem{Theorem}{Theorem}[section]
\newtheorem{Assumption}[Theorem]{Assumption}
\newtheorem{Lemma}[Theorem]{Lemma}
\newcommand{\dx}{\,\mathrm{d} x}
\DeclareMathAlphabet{\mathsfsl}{OT1}{cmss}{m}{sl}
\pgfplotsset{compat=1.17} 
\begin{document}
\title{
Maximization of Supercapacitor Storage via Topology Optimization of Electrode Structures
\thanks{This work was supported in part by the National Natural Science Foundation of China under grants (No. 12401534 and No. 12471377) and the Science and Technology Commission of Shanghai Municipality (No. 22DZ2229014)}}

\author{Jiajie Li \thanks{Department of Applied Mathematics, The Hong Kong Polytechnic University, Kowloon, Hong Kong. E-mail: jiajienumermath.li@polyu.edu.hk}
\and Xiang Ji \thanks{School of Mathematical Sciences, Shanghai Jiao Tong University, Shanghai 200240, China. E-mail: xian9ji@sjtu.edu.cn}
\and Shenggao Zhou \thanks{School of Mathematical Sciences, MOE-LSC, CMA-Shanghai, and Shanghai Center for Applied Mathematics,  Shanghai Jiao Tong University, Shanghai, China. Shanghai Zhangjiang Institute of Mathematics, Shanghai, China. E-mail: sgzhou@sjtu.edu.cn}
\and Shengfeng Zhu \thanks{Key Laboratory of Ministry of Education \& Shanghai Key Laboratory of Pure Mathematics and Mathematical Practice \& School of Mathematical Sciences, East China Normal University, Shanghai 200241, China. E-mail: sfzhu@math.ecnu.edu.cn}
}

\maketitle

\begin{abstract}
    As widely used electrochemical storage devices, supercapacitors deliver higher power density than batteries, but suffer from significantly lower energy density. In this work, we propose a topology optimization model for electrode structure to maximize energy storage in supercapacitors. The existence of minimizers to the resulting optimal control problem, which is constrained by a modified steady-state Poisson--Nernst--Planck system describing ionic electrodiffusion, has been theoretically established by using the direct method in the calculus of variation. Sensitivity analysis of the topology optimization model is performed to derive variational derivatives and corresponding adjoint equations. A gradient flow formulation discretized by a stabilized semi-implicit scheme is developed to solve the resulting topology optimization problem. Extensive numerical experiments present various porous electrode structures that own large area of electrode-electrolyte interface, demonstrating the effectiveness and robustness of the proposed topology optimization model and corresponding algorithm. 
    
\end{abstract}

{\bf Keywords: } 
Supercapacitor, Electrode structure, Topology optimization, Phase field model, Modified Poisson--Nernst--Planck system



\section{Introduction}
Supercapacitors, also known as electric double layer capacitors, have received significant attention due to their unique capability to bridge the performance gap between conventional capacitors and batteries~\cite{supercapacitor_science_2}. It has been widely used in electric vehicles, unmanned aerial vehicles, regenerative braking, etc.~\cite{EDLC_app,Supercapacitors_ACS_EL_3}. Such vast application landscape drives the pursuit of supercapacitors with superior performance.

During the past several decades, optimization of the shape and topology that guide the design and material distributions \cite{DelZol,SokolowskiZolesio1992} has exhibited promising applications in industry and engineering, such as solid mechanics \cite{Bendsoe}, computational fluid dynamics \cite{MP}, and photonic crystals \cite{SH}.
The optimization of the shape and topology offers a promising tool to boost the performance of electrochemical devices~\cite{Yaji2018,YoonPark,Roy2022,Simon2008,Ishizuka2020}. However, existing literature on topology optimization in electrochemistry is rare and simplifies the governing electrodiffusion system to some extent. For instance, spatial concentration variation has been neglected, resulting in a governing system that involves only the Poisson equation of chemical potential~\cite{Ishizuka2020,Onishi2019,Roy2022}.


The electrodiffusion of ions in electrolytes can be accurately characterized by the Poisson--Nernst--Planck (PNP) system or their modified versions~\cite{BazantPRE,HsiehJDE2020,KilicPRE2007II,KilicPRE2007I}.
These continuum models describe the
transport of ions under the influence of both ionic concentration gradient and an electric field, providing valuable insights into the spatial and temporal dynamics of ionic species across different environments. In this work, we employ an topology optimization approach to maximize the charge storage of supercapacitors, in which electric energy is stored in ionic electric double layers that are described by the steady-state PNP system. Since electric double layers are formed at the electrode-electrolyte interface, supercapacitors often adopt nanoporous electrode structures to maximize the charge storage via increasing the electrode-electrolyte interface area~\cite{ChanutPNAS2023}. 
Historically, the study of porous electrodes began with the transmission line model~\cite{Levie_1963,Levie_1964} based on linearized electrolyte theories. 
Due to its capability to closely fit the experimental data by adjusting the parameters, the transmission line model has gained widespread applications~\cite{Wu_2022_ChemRev}. 
However, it significantly simplifies porous electrodes by neglecting its complex microstructure, such as irregular pore geometry and spatial heterogeneity.

Recently, a shape optimization approach has been used to optimize the structure of supercapacitor electrode, aiming to maximizing charge storage subject to the steady-state PNP system and a geometry constraint \cite{LZZIJNME2025}. Since larger area of electrode-electrolyte interface often leads to a higher density of energy storage, an additional perimeter constraint is required to ensure the well-posedness of the optimization problem.  Compared to shape optimization \cite{DelZol,SokolowskiZolesio1992} by adjusting the boundary to obtain a better layout, topology optimization, which can perform both shape and topological changes of geometric structures, is able to better explore the design space and possibly capture nanoporous structure of large area of electrode-electrolyte interface. Two types of implicit interface methods, such as the level set method \cite{Osher1} and the phase field method \cite{BS, JLXZ,Shen2019}, have been widely used to track both the shape and topology evolution. It is noteworthy that in topology optimization formulation, the steady-state PNP system should be appropriately modified to reflect electrolyte accessible domain that keeps evolving during the optimization. In addition, it is often desirable to theoretically establish the existence of minimizers to the optimal control problem that describes the topology optimization subject to the modified PNP system for electrodiffusion of ions.


In this work, we investigate topology optimization of electrode structures with a phase field method for maximization of supercapacitor storage~\cite{BazantPRE,Lian_PhysRevLett_2020}. The main idea behind the phase field model of topology optimization \cite{DBH,Garcke2015,JLXZ} is to combine both the objective and the so-called Ginzburg–Landau energy to construct the total free energy. The latter can be regarded as a diffuse interface approximation of perimeter regularization \cite{Garcke2015}, ensuring the existence of a solution to the optimization problem. The objective representing the total net charge in the system is maximized subject to a geometry constraint and a modified steady-state PNP system, which is proposed to describe the electrodiffusion of ions in an evolving electrolyte-accessible domain. Existence of a minimizer to the topology optimization model is rigorously established by using the direct method in the calculus of variation. A gradient flow strategy, discretized by a stabilized semi-implicit scheme, is developed to solve the model problem, and finds a local minimizer, whose phase field function represents the optimized configuration of interest.   


The rest of the paper is organized as follows. In Section 2, we bulid a topology optimization model to maximize total net charge storage with respect to electrode configuration. The existence to a minimizer of the resulting optimal control problem is theoretically established as well. 
In Section 3, we perform sensitivity analysis of the model and derive a Gâteaux derivative of the objective with respect to the phase field function. Then, a gradient flow strategy is proposed to solve the constrained optimization problem. 
In Section 4, we present  spatial and temporal discretization of the gradient flow formulation.
In Section 5,  we perform numerical experiments concerning electrode topology optimization in both 2D and 3D spaces. A brief conclusion is drawn in Section 6. 


\section{Model setting}

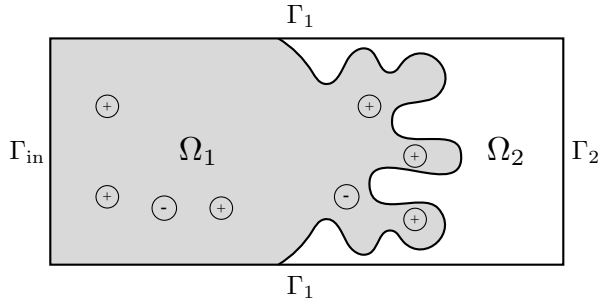
\begin{figure}[htbp]
\centering
\begin{tikzpicture}[scale=3]
	\draw[thick,fill=gray!30!white] (1.0,1.0) -- (0.0,1.0) -- (0.0,0.0) -- (1.0,0.0) to[curve through =
	{(1.15,0.15) (1.2,0.2) (1.3,0.1) (1.4,0.05) (1.5,0.15) (1.55,0.1) (1.7,0.25) (1.4,0.35)  (1.8, 0.5) (1.5,0.65) (1.7,0.75) (1.55,0.9)  (1.5,0.85) (1.4,0.95) (1.3,0.9) (1.2,0.8) (1.15,0.85)}] (1.0,1.0);
	\draw[thick] (1.0,0) -- (2.25,0) -- (2.25,1.0) -- (1.0,1.0);
	\draw (0.65,0.5) node[thick, scale=1.25] {$\Omega_1$};
	\draw (2.0,0.5) node[thick, scale=1.25] {$\Omega_2$};
	\draw (-0.1,0.5) node[thick, scale=1.] {$\Gamma_{\rm in}$};
	\draw (1.1,-0.1) node[thick, scale=1.] {$\Gamma_1$};
	\draw (1.1,1.1) node[thick, scale=1.] {$\Gamma_1$};
	\draw (2.35,0.5) node[thick, scale=1.] {$\Gamma_2$};
\draw node[circle,draw][scale=0.4] at (0.75,0.25) {\textbf{+}};
\draw node[circle,draw][scale=0.4] at (0.25,0.3){\textbf{+}};
\draw node[circle,draw][scale=0.4] at (0.25,0.7){\textbf{+}};
\draw node[circle,draw][scale=0.4] at (1.4,0.7){\textbf{+}};
\draw node[circle,draw][scale=0.4] at (1.6,0.48){\textbf{+}};
\draw node[circle,draw][scale=0.4] at (1.6,0.2){\textbf{+}};
\draw node[circle,draw][scale=0.6] at (0.5,0.25) {\textbf{-}};
\draw node[circle,draw][scale=0.6] at (1.3,0.3) {\textbf{-}};
\end{tikzpicture}
\caption{Schematic plot of a supercapacitor model on a computational domain $\Omega=\Omega_1 \cup \Omega_2$, where $\Omega_1$ shown in gray represents the electrolyte domain and $\Omega_2$ shown in white is the design electrode domain.  }
\label{fig1}
\end{figure}
We introduce a mathematical model of topology optimization subject to a geometric volume constraint to maximize ionic charge storage of supercapacitors that are described by the steady-state PNP system. Let $\Omega \subset \mathbb{R}^d\ (d= 2,3)$ be an open bounded domain with a Lipschitz boundary $\partial \Omega$ that consists of 
disjoint boundaries: $\partial\Omega = \overline{\Gamma_{\rm in}}\cup \overline{\Gamma_1}\cup \overline{\Gamma_2}$ (cf. Fig.~\ref{fig1}). Let $\psi:\Omega\rightarrow\mathbb{R}$ be the electric potential, and let $c_i:\Omega\rightarrow\mathbb{R} \ (i=1,2)$ be the concentrations of binary ionic species with valence $z_1=1$ and $z_2=-1$. 
Following the standard notations of
Sobolev spaces \cite{Adam}, we introduce a set for potential functions: $H^1_{g}(\Omega):=\{w\in H^1(\Omega): w=g\ {\rm on}\ \Gamma_{\rm in}\cup\Gamma_2 \}$, and sets for ionic concentration functions: $H^1_{c, i}(\Omega):=\{w\in H^1(\Omega): w = c_i^\infty\ {\rm on }\ \Gamma_{\rm in}\cup \Gamma_2 \}$ ($i=1,2)$, where $g\in H^2(\Omega)$ is a given boundary potential data and $c_i^\infty\in H^2(\Omega)$ are the boundary concentrations (modeling a half supercapacitor connected to an ionic reservoir at $\Gamma_{\rm in}$). Let $g\vert_{\Gamma_{\rm in}}=0$ represent zero electric potential on the left edge and $c_i^\infty\vert_{\Gamma_2}=0$ represent zero concentration on the right edge or electrode region. Moreover, let us introduce Sobolev spaces: $H^1_{d,0}(\Omega):=\{w\in H^1(\Omega): w=0\ {\rm on }\ \Gamma_{\rm in}\cup\Gamma_2 \}$, $\mathbf{H}^1(\Omega):= [H^1(\Omega)]^d$, and $\textbf{H}^1_{d,0}(\Omega):=\{\bm w\in \textbf{H}^1(\Omega): \bm w =\bm 0\ {\rm on}\ \Gamma_{\rm in}\cup \Gamma_2 \}$.

\subsection{Phase field model}
The entire domain $\Omega$ is partitioned into three disjoint subregions $\overline{\Omega}:= \overline{\Omega}_1\cup \overline{\Omega}_2 \cup \overline{\Omega}_0$, where $\Omega_1$, $\Omega_2$, and $\Omega_0$ represent the domains for electrolyte, electrode, and transition diffuse layer, respectively (cf. Fig.~\ref{fig1}). The domain is represented via a phase field function $\phi:\Omega\rightarrow \mathbb{R}$ such that
\begin{equation*}\left\{
\begin{aligned}
&\phi( x )=1, \quad &&{\rm in}\ \Omega_1,\\
&0<\phi( x )< 1,&&{\rm in}\ \Omega_0,\\
&\phi( x )=0, \quad &&{\rm in}\ \Omega_2.
\end{aligned}\right.
\end{equation*}
Let $D_0\ (\text{resp.}\ D_m)$ and $\epsilon_0\ (\text{resp.}\ \epsilon_m)$ denote the diffusion and dielectric coefficients in the electrolyte (resp. electrode) domain, with $D_0 \gg D_m >0$ and $\epsilon_m\gg\epsilon_0 >0$. Using material interpolation with a positive integer $p\in \mathbb{Z}^+$ (typically $ p\geq 2)$, the diffusion and dielectric functions can be represented by the phase field function as $$
D(\phi) = D_m +\max\{ \min\{ \phi^p,1\}, 0\}(D_0- D_m)\quad{\rm and}\quad
\epsilon(\phi)=\epsilon_m  + \max\{ \min\{ \phi^p,1\}, 0\}(\epsilon_0-\epsilon_m),$$
respectively over the entire computational domain. Consider the electrostatic free-energy functional
\begin{equation*}
\mathcal{F}(c_1,c_2):=\int_\Omega \left(\sum_{i=1}^2 z_ic_i\right)\psi+\sum_{i=1}^2 c_i(\log(c_i)-1)\dx + \int_\Omega c_i \alpha_0(1-\phi)\dx,
\end{equation*}
where \(\alpha_0\gg 1\) characterizes repulsive interactions between the electrode and ions. The variational derivative of \(\mathcal{F}\) is given by
\begin{equation*}\mu_i = \frac{\delta\mathcal{F}}{\delta c_i}=z_i \psi +\log(c_i)+ \alpha_0(1-\phi),\quad i=1,2.\end{equation*}
Then, a modified time-dependent Nernst--Planck equation is expressed as
\begin{equation*}
\frac{\partial c_i}{\partial t}= \nabla\cdot \left(D(\phi)c_i \nabla \mu_i \right)= \nabla\cdot\left[D(\phi)\nabla c_i+D(\phi)\left(z_i\nabla\psi - \alpha_0\nabla\phi \right)c_i \right],\quad i=1,2.
\end{equation*}
The electro-diffusion of ions in the entire computational domain is governed by the modified steady-state PNP system~\cite{Rubinstein_1990}
$$
- \nabla \cdot\left[ D(\phi)(\nabla c_i + z_i c_i \nabla \psi-\alpha_0c_i\nabla\phi) \right]=0\quad {\rm in}\ \Omega,\quad i=1,2,
$$
with boundary conditions $ D(\phi)(\nabla c_i + z_i c_i \nabla \psi-\alpha_0c_i\nabla\phi)\cdot \bm{n}=0$ on $\Gamma_1$ and $c_i=c_i^\infty$ on $\Gamma_{\rm in}\cup \Gamma_2$, where $\bm{n}$ is an outward unit normal to the boundary $\Gamma_1$.
The electric potential on the entire domain is described by the Poisson equation
\begin{equation*}
- \nabla\cdot \left[\epsilon(\phi) \nabla \psi\right] = \sum_{i=1}^2 z_i c_i\quad {\rm in}\ \Omega,
\end{equation*}
with boundary conditions $\epsilon(\phi)\nabla \psi\cdot \bm{n}=0$ on $\Gamma_1$ and $\psi=g$ on $\Gamma_{\rm in}\cup \Gamma_2$.
Denote by $$V(\phi) := \int_{\Omega} \max\{\min\{\phi, 1\},0\} \,\mathrm{d}{x},$$ the volume of electrolyte region and \(V_0\) the target volume under consideration. 
It is of practical interest for electrochemical energy storage devices, such as supercapacitors, to maximize the total net charge storage~\cite{BazantPRE,Lian_PhysRevLett_2020}. 

In this work, we consider to optimize an objective of the total net charge as
\begin{equation}\label{Objfun}
\mathcal{J}(\bm c(\phi)):=\int_{\Omega} j(x, \bm c(\phi))\dx, \quad\text{with}\quad j(x, \bm c(\phi)):=-\sum_{i=1}^2 z_i c_i(\phi).
\end{equation}
With the phase field description, we study the total free energy
\begin{equation}\label{TopPro}
\mathcal{W}(\phi,\bm c(\phi)):= \int_{\Omega}\bigg[\frac{\kappa}{2}\vert\nabla\phi\vert^2+\frac{1}{\kappa}\omega(\phi)\bigg]\dx + \mathcal{J}(\bm c(\phi)),
\end{equation}
where $\kappa >0$ is a diffusive parameter, $$\omega(\phi):=\frac{1}{4}\phi^2(\phi-1)^2,$$ is a double well potential, and $\bm c:=[c_1,c_2]^{\rm T}$. The first integral on the right-hand side of (\ref{TopPro}) is the Ginzburg-Landau energy~\cite{Garcke2015}, which approximates proportionally the perimeter/area of the electrolyte-electrode interface in the sense of $\Gamma$-convergence \cite{Modica1987} as $\kappa \rightarrow 0^+$. The double well potential enforces phase separation, driving \(\phi(x)\) toward 0 or 1 for each $x\in \Omega$. Introduce a set of admissible phase field functions as 
\begin{equation}
     \mathcal{A}:=\{ \phi\in W^{1,\infty} (\Omega): \phi\vert_{\Gamma_{\rm in}}=1,\phi\vert_{\Gamma_2}=0, V(\phi)=V_0\}.
\end{equation}
The topology optimization problem is characterized by the following optimal control problem:
\begin{equation}\label{TopOpt}
\min_{\phi \in \mathcal{A}}\ \mathcal{W}(\phi,\bm c(\phi)),
\end{equation}
subject to the variational form of the steady-state PNP system: Find $c_i\in H^1_{c, i}(\Omega)\ (i=1,2)$ and $\psi\in H^1_{g}(\Omega)$ such that
\begin{equation}\label{weakPNP}
\left\{
\begin{aligned}
&\int_{\Omega}D(\phi)(\nabla c_i + z_i c_i \nabla \psi-\alpha_0c_i\nabla\phi)\cdot \nabla s  \dx=0,\quad && \forall s \in H^1_{d,0}(\Omega),\\
&\int_{\Omega} \epsilon(\phi)\nabla\psi\cdot \nabla \zeta - \bigg(\sum_{i=1}^2 z_ic_i\bigg)\zeta\dx=0,&& \forall\zeta \in H^1_{d,0}(\Omega).
\end{aligned}\right.
\end{equation}
To solve the problem (\ref{TopOpt}) numerically, the classic shape optimization method using mesh moving \cite{LZZIJNME2025} can only perform boundary variations of the electrolyte region without topological changes during shape evolution, and thus design possibilities are limited. Compared with shape optimization, the phase field model can be performed on the entire fixed design domain to explore the complicated electrolyte-electrode interface, at which the formed electric double layers account for the main net charge storage. As a diffusive interface tracking technique, the phase field method arising from material sciences has shown capabilities in topology optimization for, e.g., structural design \cite{BC,BS,DBH,JLXZ} and control of fluid flows \cite{Garcke2015}.

\subsection{Existence results}
We first introduce the modified Slotboom variables 
\begin{equation}\label{SoltTrans}
\rho_i  = c_i \exp(z_i\psi-\alpha_0\phi),\quad i=1,2.
\end{equation}
Let the boundary data $\rho_i^\infty\in H^2(\Omega)$ satisfy $\rho_i^\infty\vert_{\Gamma_{\rm in}} = c_i^\infty\exp{(-\alpha_0)}$ and $\rho_i^\infty\vert_{\Gamma_2}= 0$. Correspondingly, we denote $$H^1_{\rho, i}(\Omega):=\{ w\in H^1(\Omega): w= \rho_i^\infty\ {\rm on}\ \Gamma_{\rm in}\cup \Gamma_2 \}.$$ Then the steady-state PNP system \eqref{weakPNP} is equivalent to the following variational form: Given $\phi\in \mathcal{A}$, find $\rho_i\in H^1_{\rho, i}(\Omega)$ and $\psi \in H^1_g(\Omega)$ such that
\begin{equation}\label{PNPslot}\left\{
\begin{aligned}
&\int_{\Omega} D(\phi) \exp{(\alpha_0\phi-z_i\psi)}\nabla \rho_{i}\cdot \nabla s\dx =0,\quad&& \forall s \in H^1_{d,0}(\Omega) \\
&\int_{\Omega} \epsilon(\phi) \nabla \psi \cdot \nabla \zeta \dx - \int_{\Omega} \bigg(\sum_{i=1}^2 z_i \rho_{i} \exp{(\alpha_0\phi- z_i \psi)}\bigg) \zeta \dx=0\quad && \forall \zeta\in H^1_{d,0}(\Omega).
\end{aligned}\right.
\end{equation}

To establish the existence and uniqueness of the solution to \eqref{PNPslot}, we state assumptions on the geometry, boundary data, and model parameters (see \cite[pages 34, 41, and 44]{Markowich1986}).

\begin{Assumption}\label{assumptionPNP}

\textbf{(H1)}: Let $\Omega$ be a bounded domain of class $C^{0,1}$ in $\mathbb{R}^d$. Let $\Gamma_1$ consist of $C^2$-segments, and the $(d-1)$-dimensional Lebesgue measure of $\Gamma_{\rm in}\cup \Gamma_2$ is positive.

\textbf{(H2)}: The Dirichlet boundary data satisfy
\begin{equation*}
\begin{aligned}
 &(\rho_1^{\infty}, \rho_2^{\infty}, g)\in \big[H^{2}(\Omega)\big]^3,  (\rho_1^{\infty}, \rho_2^{\infty}, g)\big\vert_{\Gamma_{\rm in}\cup\Gamma_2} \in \big[L^{\infty}(\Gamma_{\rm in}\cup\Gamma_2)\big]^3, \frac{\partial \rho_1^\infty}{\partial \bm n}\bigg\vert_{\Gamma_1} = \frac{\partial \rho_2^\infty}{\partial \bm n}\bigg\vert_{\Gamma_1}= \frac{\partial g}{\partial \bm n}\bigg\vert_{\Gamma_1}=0,
\end{aligned}
\end{equation*}
and there exists $U_+ \geq 0$ such that
\begin{equation*}
e^{-U_+}\leq \inf_{ x  \in\Gamma_{\rm in}\cup\Gamma_2} \rho_1^{\infty}, \inf_{ x  \in \Gamma_{\rm in}\cup\Gamma_2} \rho_2^{\infty};\quad \sup_{ x  \in\Gamma_{\rm in}\cup\Gamma_2} \rho_1^{\infty}, \sup_{ x  \in\Gamma_{\rm in}\cup\Gamma_2} \rho_2^{\infty}\leq e^{U_+}.
\end{equation*}

\textbf{(H3)}: For each  $\phi \in \mathcal{A}$, the diffusivity function $D(\phi)\in W^{1,\infty}(\Omega)$ and dielectric coefficient $\epsilon(\phi)\in W^{1,\infty}(\Omega)$ satisfy $0<D_m \leq D(\phi)( x)\leq D_0$ and $0<\epsilon_0 \leq \epsilon(\phi)( x)\leq \epsilon_m$ a.e. in $\Omega$, respectively.

\textbf{(H4)}: The solution $w$ of $\Delta w = f$ in $\Omega$ with $w\vert_{\Gamma_{\rm in}\cup\Gamma_2}=0$ and $\frac{\partial w}{\partial\bm n}\vert_{\Gamma_1}=0$ satisfies $$\| w\|_{W^{2,q}(\Omega)}\leq \mathcal{C}\| f\|_{L^q(\Omega)},$$ for every $f\in L^q(\Omega)$ with $q=2$ or $q=\frac{3}{2}$, and some generic constant $\mathcal{C}>0$.

\end{Assumption}

\begin{Lemma}\label{PNPLinf}
Let Assumption \ref{assumptionPNP} hold, and suppose the boundary data $g$ is sufficiently small. Given $\phi\in \mathcal{A}$, the problem \eqref{PNPslot} admits a unique solution $(\rho_1,\rho_2,\psi)\in \big[H^2(\Omega)\cap L^{\infty}(\Omega)\big]^3$, satisfying the $L^{\infty}$-estimates
\begin{equation*}
\begin{aligned}
    &\exp{(-U_+)}\leq \rho_i(x)\leq \exp{(U_+)},\quad {\rm a.e.}\ {\rm in}\ \Omega,\quad i=1,2, \\  
    &\min\bigg( \inf_{ x \in \Gamma_{\rm in}\cup \Gamma_2}g( x ), -U_+ \bigg)\leq \psi(  x )\leq \max\bigg(\sup_{ x  \in\Gamma_{\rm in}\cup \Gamma_2}g(  x ), U_+ \bigg),\quad {\rm a.e.}\ {\rm in}\ \Omega,
\end{aligned}
\end{equation*}
\end{Lemma}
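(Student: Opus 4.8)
The plan is to prove existence by the decoupling fixed-point argument of \cite{Markowich1986}, adapted to the present exponential weights; to read off the pointwise bounds from weak maximum / sub-supersolution principles; to bootstrap regularity to $[H^2(\Omega)]^3$ via the elliptic estimate \textbf{(H4)}; and finally to establish uniqueness, under the smallness hypothesis on $g$, through an energy estimate on the difference of two solutions. Throughout, $\phi\in\mathcal A$ is fixed, so $D(\phi),\epsilon(\phi)\in W^{1,\infty}(\Omega)$ obey the bounds of \textbf{(H3)} and $\|\phi\|_{L^\infty(\Omega)}<\infty$; all constants below may depend on $\phi$.

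\emph{Decoupled subproblems and a priori bounds.} Given $\varphi\in L^\infty(\Omega)$ with $\|\varphi\|_{L^\infty(\Omega)}\le M$ (the constant $M$ fixed at the end of this step), each $\rho_i$-equation in \eqref{PNPslot} is linear, with the bilinear form $(\rho,s)\mapsto\int_\Omega D(\phi)\exp(\alpha_0\phi-z_i\varphi)\,\nabla\rho\cdot\nabla s$ bounded and coercive on $H^1_{d,0}(\Omega)$: coercivity follows from $D(\phi)\ge D_m>0$, the weight bound $e^{-\alpha_0\|\phi\|_\infty-M}\le\exp(\alpha_0\phi-z_i\varphi)\le e^{\alpha_0\|\phi\|_\infty+M}$, and the Poincaré inequality, valid since $|\Gamma_{\rm in}\cup\Gamma_2|_{d-1}>0$ by \textbf{(H1)}. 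Lax--Milgram gives a unique $\rho_i=\rho_i(\varphi)\in H^1_{\rho,i}(\Omega)$; testing with the truncations $(\rho_i-e^{U_+})^+$ and $(e^{-U_+}-\rho_i)^+$, which lie in $H^1_{d,0}(\Omega)$ by \textbf{(H2)}, and using positivity of the weight forces them to vanish, so $e^{-U_+}\le\rho_i(\varphi)\le e^{U_+}$ a.e.\ in $\Omega$. With these positive $\rho_i(\varphi)$ frozen, the Poisson problem for $\psi\in H^1_g(\Omega)$ has the reaction term $t\mapsto-\sum_i z_i\rho_i(\varphi)\exp(\alpha_0\phi-z_it)$, which is nondecreasing since $z_1=1$, $z_2=-1$, $\rho_i>0$; the constants $\overline\psi:=\max(\sup_{\Gamma_{\rm in}\cup\Gamma_2}g,U_+)$ and $\underline\psi:=\min(\inf_{\Gamma_{\rm in}\cup\Gamma_2}g,-U_+)$ are ordered super/sub-solutions (the verification uses $\rho_1/\rho_2\le e^{2U_+}$ from the previous bound together with $\underline\psi\le g\le\overline\psi$ on $\Gamma_{\rm in}\cup\Gamma_2$), so the sub-supersolution method and monotonicity of the reaction term give a unique $\psi=:T\varphi$ with $\underline\psi\le\psi\le\overline\psi$ a.e.\ --- precisely the bound asserted in the lemma. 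Setting $M:=\max(\|g\|_{L^\infty(\Gamma_{\rm in}\cup\Gamma_2)},U_+)$ makes $\|T\varphi\|_{L^\infty(\Omega)}\le M$.

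\emph{Schauder fixed point and regularity.} Thus $T$ maps the convex set $\mathcal K:=\{\varphi\in H^1_g(\Omega):\|\varphi\|_{L^\infty(\Omega)}\le M\}$ into itself. By \textbf{(H4)} with $q=2$ the right-hand sides of both subproblems are bounded in $L^2(\Omega)$ uniformly over $\mathcal K$, so $\rho_i(\varphi)$ and $T\varphi$ lie in a bounded subset of $H^2(\Omega)$; restricting $T$ to an $H^2(\Omega)$-ball intersected with $\mathcal K$ --- convex and, by Rellich, compact in $H^1(\Omega)$ --- and checking $H^1(\Omega)$-continuity of $T$ (pass to the limit in the two weak forms, controlling the exponential terms via the uniform $L^\infty$ bound and dominated convergence), Schauder's fixed-point theorem yields $\psi\in\mathcal K$ with $T\psi=\psi$; together with $\rho_i:=\rho_i(\psi)$ this solves \eqref{PNPslot} and satisfies the claimed $L^\infty$ estimates. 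A short bootstrap finishes the regularity: \textbf{(H4)} with $q=2$ gives $\psi\in H^2(\Omega)$, hence $\exp(\alpha_0\phi-z_i\psi)\in W^{1,6}(\Omega)$ using $\phi\in W^{1,\infty}(\Omega)$; then \textbf{(H4)} with $q=\tfrac32$ gives $\rho_i\in W^{2,3/2}(\Omega)\hookrightarrow W^{1,3}(\Omega)$, and one further application with $q=2$ gives $\rho_i\in H^2(\Omega)$. Thus $(\rho_1,\rho_2,\psi)\in[H^2(\Omega)\cap L^\infty(\Omega)]^3$.

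\emph{Uniqueness for small $g$ --- the main obstacle.} Let $(\rho_i,\psi)$ and $(\tilde\rho_i,\tilde\psi)$ be two solutions in this class, $e_i:=\rho_i-\tilde\rho_i$, $e_\psi:=\psi-\tilde\psi\in H^1_{d,0}(\Omega)$. Subtracting the $\rho_i$-equations and testing with $e_i$ produces the coercive term $\int_\Omega D(\phi)\exp(\alpha_0\phi-z_i\psi)|\nabla e_i|^2$ plus a coupling term in $\exp(\alpha_0\phi-z_i\psi)-\exp(\alpha_0\phi-z_i\tilde\psi)$, which the mean value theorem on $|\psi|,|\tilde\psi|\le M$ bounds by $C(M)|e_\psi|$; Hölder with exponents $\tfrac13+\tfrac16+\tfrac12=1$, the embedding $H^1(\Omega)\hookrightarrow L^6(\Omega)$ ($d\le3$), Poincaré, and $\nabla\tilde\rho_i\in L^3(\Omega)$ then yield $\|\nabla e_i\|_{L^2(\Omega)}\le C_1\|\nabla e_\psi\|_{L^2(\Omega)}$. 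Subtracting the Poisson equations and testing with $e_\psi$: the contribution carrying $\exp(\alpha_0\phi-z_i\psi)-\exp(\alpha_0\phi-z_i\tilde\psi)$ has, thanks to $z_1=1$, $z_2=-1$ and $\tilde\rho_i>0$, a nonpositive sign and may be discarded, while the remaining contribution is linear in $e_i$, giving $\|\nabla e_\psi\|_{L^2(\Omega)}\le C_2\sum_i\|\nabla e_i\|_{L^2(\Omega)}$. Chaining these,
\[
\sum_{i=1}^2\|\nabla e_i\|_{L^2(\Omega)}\;\le\;2\,C_1 C_2\sum_{i=1}^2\|\nabla e_i\|_{L^2(\Omega)},
\]
which forces $e_1=e_2=e_\psi=0$ as soon as $2C_1C_2<1$. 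I expect the delicate point to be exactly this last step: $C_1$ and $C_2$ carry the potentially large factors $e^{\alpha_0\|\phi\|_\infty}$, $e^{M}$, the a priori norms of $\tilde\rho_i$, and the Sobolev/Poincaré constants, and one must verify that requiring $g$ --- hence $M$ and the amplitude of $\psi$ that drives the non-monotone coupling --- sufficiently small drives $C_1C_2$ below $\tfrac12$; informally, the non-monotone coupling must vanish in the small-bias limit, and quantifying this (possibly via a more refined entropy-type choice of test functions rather than the plain energy pairing) is the technical heart of the argument. Existence and the $L^\infty$ bounds, by contrast, require no smallness of $g$.
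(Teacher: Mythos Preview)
Your existence, $L^\infty$-bound, and regularity arguments track the same Markowich decoupling framework the paper invokes, only with more detail than the paper's proof (which largely cites \cite{Markowich1986} by theorem number). Two minor remarks. First, the paper closes existence via Leray--Schauder rather than Schauder, but since you already have the self-map into a compact convex set this is a cosmetic difference. Second, the paper stresses that the final $L^\infty$ bounds are \emph{independent of $\phi$}; your truncation and sub/supersolution arguments do deliver this (the cut-offs $e^{\pm U_+}$ and $\underline\psi,\overline\psi$ depend only on $U_+$ and $g$, not on $\phi$), but your opening caveat ``all constants below may depend on $\phi$'' obscures the point, and this $\phi$-independence is exactly what the subsequent existence theorem for the optimal control problem needs.

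The genuine gap is uniqueness, and you have diagnosed it correctly. Smallness of $g$ alone cannot force $2C_1C_2<1$ in your chain: $M=\max(\|g\|_{L^\infty},U_+)\ge U_+>0$ stays bounded away from zero regardless of $g$, and $\|\nabla\tilde\rho_i\|_{L^3}$ is governed by $\rho_i^\infty$ and the domain, not by $g$; so neither $C_1$ nor $C_2$ shrinks as $g\to 0$. The paper does not attempt a direct energy estimate here at all; it simply invokes \cite[Theorem~3.4.1]{Markowich1986}, which is an implicit-function/continuation statement: at thermal equilibrium (zero applied bias) the coupled system has a unique solution and the linearization is an isomorphism, so for $g$ sufficiently small there is a locally unique solution branch emanating from equilibrium. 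Your suggested ``entropy-type'' refinement would have to recreate this localization around the equilibrium solution; a symmetric difference estimate between two arbitrary solutions cannot exploit proximity to equilibrium and will not close under the hypothesis as stated.
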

where $U_+>0$ is a constant independent of $\phi$.  
\begin{proof}
The existence proof proceeds by decoupling the steady-state PNP system \eqref{PNPslot}, following the framework presented in \cite[pages 33--45]{Markowich1986}. The key difference here lies in the variable coefficients \(D(\phi)\), $\exp{(\alpha_0\phi)}$ and \(\epsilon(\phi)\) in \eqref{PNPslot}, which satisfy $\exp{(\alpha_0\phi)}$, \(D(\phi), \epsilon(\phi) \in W^{1,\infty}(\Omega)\). Since \(\epsilon(\phi) \geq \epsilon_0 > 0\) and \(D(\phi) \geq D_m > 0\) for all \({x} \in \Omega\), the Poisson--Boltzmann type of equation in \eqref{PNPslot} is \(H_0^1\)-elliptic (i.e., coercive), with the fixed $\rho_i$. Following \cite[Theorem 3.2.1]{Markowich1986}, the coercivity and regularity of coefficients yield the existence and \(L^{\infty}\)-boundedness of solutions independent of $\phi$; cf.~\cite{PBE_SIAP11, PBE_arX24} as well.  The unique solvability and pointwise boundedness for $\rho_i$ of \eqref{PNPslot} independent of $\phi$ is an immediate result according to \cite[Lemma 3.2.2]{Markowich1986}. Then the existence of a solution of the coupled system \eqref{PNPslot} follows by showing the completely continuous mapping and utilizing Leray-Schauder's Theorem; cf.~\cite[Lemmas 3.2.3 - 3.2.5]{Markowich1986}. The \(H^2\)-regularity stems from \(L^q\)-theory for elliptic equations (\(q > 1\); see \cite[Theorem 3.3.1]{Markowich1986}) and the assumptions on boundary conditions in \textbf{(H2)}. If the solution is a local branch of the isolated equilibrium solution, the uniqueness of the weak solution holds under the smallness condition on $g$ (cf. \cite[Theorem 3.4.1]{Markowich1986}).  
\end{proof}

\begin{Assumption}\cite[page 234]{Garcke2015}\label{AssumptionJ}
Let $j( x , \bm z): \Omega\times \mathbb{R}^2\rightarrow \mathbb{R}$ be a Carath\'{e}odory function fulfilling
\begin{itemize}
    \item $j(\cdot, \bm z): \Omega\rightarrow \mathbb{R}$ is measurable for each $\bm z \in \mathbb{R}^2$.

    \item $j(x ,\cdot): \mathbb{R}^2\rightarrow \mathbb{R}$ is continuous for almost every $ x \in\Omega$.
\end{itemize}
Assume there exist non-negative functions $a\in L^1(\Omega)$ and $b\in L^{\infty}(\Omega)$ such that for almost every $ x \in \Omega$
\begin{equation}\label{upCon}
\vert j( x , \bm z)\vert \leq a( x ) + b(x )| \bm z|^2,
\end{equation}
for all $\bm z \in \mathbb{R}^2$. Additionally, $\mathcal{J}(\bm c): H^1_{c, 1}(\Omega)\times H^1_{c, 2}(\Omega)\rightarrow \mathbb{R}$ is weakly lower semicontinuous and bounded from below.
\end{Assumption}

We define $$\underline{\psi}:=\min\big( \inf_{ x \in \Gamma_{\rm in}\cup \Gamma_2}g( x ), -U_+ \big)\ \ {\rm and}\ \
\overline{\psi}:=\max\big( \sup_{ x \in \Gamma_{\rm in}\cup \Gamma_2}g( x ), U_+ \big),$$
and
$$\underline{\mathcal{E}}_i:=\min\big\{\exp{(-z_i \overline{\psi})},\exp{(-z_i \underline{\psi})}\big\}\quad {\rm and}\quad \overline{\mathcal{E}}_i:=\max\big\{\exp{(-z_i \overline{\psi})},\exp{(-z_i \underline{\psi})}\big\}.$$

We show the existence of the solution to the optimal control problem \eqref{TopOpt} under sufficient regularity. Specifically, the coercivity of the total free energy \(\mathcal{W}\), combined with the compactness properties of Sobolev spaces, allows the application of the direct method in the calculus of variations.

\begin{Theorem}
Assume that the conditions of Lemma \ref{PNPLinf} and Assumption \ref{AssumptionJ} are satisfied. Then, there exists at least one minimizer $(\phi^*,c_1^*,c_2^*,\psi^*)\in \mathcal{A}\times H_{c,1}^1(\Omega)\times H_{c,2}^1(\Omega)\times H^1_g(\Omega) $ for the optimal control problem \eqref{TopOpt} subject to \eqref{weakPNP}.
\end{Theorem}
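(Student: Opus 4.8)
The plan is to run the direct method in the calculus of variations on the reduced functional $\phi\mapsto\mathcal{W}(\phi,\bm c(\phi))$. Under the hypotheses of Lemma~\ref{PNPLinf} (in particular the smallness of $g$), for every admissible $\phi$ the state $(\rho_1,\rho_2,\psi)$ solving \eqref{PNPslot} — equivalently $(c_1,c_2,\psi)$ solving \eqref{weakPNP} — is unique, so $\mathcal{W}(\phi,\bm c(\phi))$ is a well-defined function of $\phi$ alone, and it is bounded below on $\mathcal{A}$ since the Ginzburg--Landau term is non-negative and $\mathcal{J}$ is bounded below by Assumption~\ref{AssumptionJ}. First I fix a minimizing sequence $\{\phi_n\}\subset\mathcal{A}$ with $\mathcal{W}(\phi_n,\bm c(\phi_n))\to\inf_{\mathcal{A}}\mathcal{W}$. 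From $\mathcal{W}(\phi_n,\bm c(\phi_n))\le\inf_{\mathcal{A}}\mathcal{W}+1$ and the lower bound on $\mathcal{J}$ I get a uniform bound on $\int_\Omega\big(\tfrac{\kappa}{2}|\nabla\phi_n|^2+\tfrac1\kappa\omega(\phi_n)\big)\dx$; subtracting a fixed $H^1$ function with boundary values $1$ on $\Gamma_{\rm in}$ and $0$ on $\Gamma_2$ and using the Poincaré inequality on $H^1_{d,0}(\Omega)$ upgrades this to $\|\phi_n\|_{H^1(\Omega)}\le C$. Passing to a subsequence gives $\phi_n\rightharpoonup\phi^*$ in $H^1(\Omega)$, $\phi_n\to\phi^*$ strongly in $L^q(\Omega)$ for $q$ below the critical Sobolev exponent and a.e.\ in $\Omega$, and, by compactness of the trace, $\phi_n|_{\partial\Omega}\to\phi^*|_{\partial\Omega}$ in $L^2(\partial\Omega)$. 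To land in $\mathcal{A}\subset W^{1,\infty}(\Omega)$ and to keep the exponential coefficients below under control, one should read $\mathcal{A}$ as carrying a uniform $W^{1,\infty}$-box bound (natural in view of the truncations $\max\{\min\{\cdot,1\},0\}$ appearing in $D,\epsilon,V$, and equivalent in practice to enforcing $0\le\phi\le1$); then $\phi_n\to\phi^*$ also weakly-$\ast$ in $W^{1,\infty}(\Omega)$, whose closed balls are weak-$\ast$ sequentially closed.

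That $\phi^*\in\mathcal{A}$ is then routine: the identities $\phi^*|_{\Gamma_{\rm in}}=1$, $\phi^*|_{\Gamma_2}=0$ pass to the limit by strong trace convergence, and $V(\phi^*)=V_0$ follows because $t\mapsto\max\{\min\{t,1\},0\}$ is $1$-Lipschitz, so $V(\phi_n)\to V(\phi^*)$.

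The heart of the proof is passing to the limit in the state system, which I would do in the Slotboom form \eqref{PNPslot}. Let $(\rho_{1,n},\rho_{2,n},\psi_n)$ solve \eqref{PNPslot} with $\phi=\phi_n$. Lemma~\ref{PNPLinf} provides the $\phi$-independent pointwise bounds $e^{-U_+}\le\rho_{i,n}\le e^{U_+}$ and $\underline{\psi}\le\psi_n\le\overline{\psi}$ a.e.; testing the two equations with $\rho_{i,n}-\rho_i^\infty$ and $\psi_n-g$ and using $D(\phi_n)\,e^{\alpha_0\phi_n-z_i\psi_n}\ge\delta>0$ and $\epsilon(\phi_n)\ge\epsilon_0>0$ (the lower bound here uses the $L^\infty$-bounds on $\phi_n,\psi_n$) yields a uniform $H^1$ bound on $(\rho_{1,n},\rho_{2,n},\psi_n)$. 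Extracting a further subsequence, $\rho_{i,n}\rightharpoonup\rho_i^*$, $\psi_n\rightharpoonup\psi^*$ in $H^1(\Omega)$, strongly in subcritical $L^q$, and a.e. Since $\phi_n$ and $\psi_n$ are uniformly bounded in $L^\infty$ and converge a.e., the coefficients $D(\phi_n)\,e^{\alpha_0\phi_n-z_i\psi_n}$ and $\epsilon(\phi_n)$ are uniformly bounded and converge a.e., hence strongly in every $L^q(\Omega)$ by dominated convergence; pairing this with the weak $L^2$-convergence of $\nabla\rho_{i,n}$, $\nabla\psi_n$ handles all gradient terms (strong$\,\times\,$weak), while in the source $\sum_i z_i\rho_{i,n}e^{\alpha_0\phi_n-z_i\psi_n}$ the factor $\rho_{i,n}$ converges strongly in $L^2$ by Rellich. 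This is the delicate point — and the expected main obstacle: a purely weak passage to the limit in these bilinear/trilinear products would fail, so the $L^\infty$-estimates of Lemma~\ref{PNPLinf} together with the compact Sobolev embedding are indispensable (and, as noted, the uniform $L^\infty$-bound on $\phi_n$ is what keeps $e^{\alpha_0\phi_n}$ under control). Thus $(\rho_1^*,\rho_2^*,\psi^*)$ solves \eqref{PNPslot} with $\phi=\phi^*$; setting $c_i^*:=\rho_i^*\,e^{\alpha_0\phi^*-z_i\psi^*}\in H^1(\Omega)\cap L^\infty(\Omega)$ and checking the Dirichlet data (using $\rho_i^\infty|_{\Gamma_{\rm in}}=c_i^\infty e^{-\alpha_0}$, $\psi^*|_{\Gamma_{\rm in}\cup\Gamma_2}=g$ with $g|_{\Gamma_{\rm in}}=0$, $\phi^*|_{\Gamma_{\rm in}}=1$, $\phi^*|_{\Gamma_2}=0$) recovers $(c_1^*,c_2^*,\psi^*)\in H^1_{c,1}(\Omega)\times H^1_{c,2}(\Omega)\times H^1_g(\Omega)$ solving \eqref{weakPNP} with $\phi=\phi^*$; by uniqueness in Lemma~\ref{PNPLinf} this is $\bm c(\phi^*)$, and one checks likewise $\bm c(\phi_n)\rightharpoonup\bm c(\phi^*)$ in $H^1$.

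Finally I verify weak lower semicontinuity of the reduced functional along $\{\phi_n\}$: $\int_\Omega\tfrac{\kappa}{2}|\nabla\phi|^2\dx$ is convex and strongly continuous on $H^1(\Omega)$, hence weakly l.s.c.; $\int_\Omega\tfrac1\kappa\omega(\phi_n)\dx\ge0$ with $\omega(\phi_n)\to\omega(\phi^*)$ a.e., so Fatou gives $\liminf_n\int_\Omega\tfrac1\kappa\omega(\phi_n)\dx\ge\int_\Omega\tfrac1\kappa\omega(\phi^*)\dx$; and $\mathcal{J}$ is weakly l.s.c.\ on $H^1_{c,1}(\Omega)\times H^1_{c,2}(\Omega)$ by Assumption~\ref{AssumptionJ} (for the specific $j=-\sum_i z_i c_i$ it is in fact weakly continuous in $L^2$, the growth bound \eqref{upCon} merely guaranteeing finiteness on $H^1$). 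Adding the three contributions and using $\bm c(\phi_n)\rightharpoonup\bm c(\phi^*)$ gives $\mathcal{W}(\phi^*,\bm c(\phi^*))\le\liminf_n\mathcal{W}(\phi_n,\bm c(\phi_n))=\inf_{\mathcal{A}}\mathcal{W}$; since $\phi^*\in\mathcal{A}$, the quadruple $(\phi^*,c_1^*,c_2^*,\psi^*)$ is the asserted minimizer.
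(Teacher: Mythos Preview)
Your proposal is correct and follows essentially the same route as the paper: direct method on the reduced functional, uniform $L^\infty$ and $H^1$ bounds on the Slotboom variables from Lemma~\ref{PNPLinf}, passage to the limit in \eqref{PNPslot}, and weak lower semicontinuity of the three pieces of $\mathcal{W}$.

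A few tactical differences are worth flagging. You obtain the uniform $H^1$ bounds on $(\rho_{i,n},\psi_n)$ by testing with $\rho_{i,n}-\rho_i^\infty$ and $\psi_n-g$, whereas the paper quotes continuous-dependence estimates from Markowich. For the gradient terms in the continuity equation, you bundle the full coefficient $D(\phi_n)e^{\alpha_0\phi_n-z_i\psi_n}$ and use dominated convergence to get strong $L^q$ convergence of the coefficient, then pair with weak $L^2$ convergence of $\nabla\rho_{i,n}$; the paper instead splits into three pieces and, for the piece involving $e^{-z_i\psi_n}-e^{-z_i\psi^*}$, invokes the $H^2$ regularity of $\rho_{i,n}$ to control $\|\nabla\rho_{i,n}\|_{L^4}$. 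Your argument is slightly more elementary here, as it avoids the $H^2$ ingredient. For the double-well term you use Fatou, the paper uses dominated convergence via $\sup_n\|\omega(\phi_n)\|_{L^\infty}<\infty$; either suffices. Finally, you are explicit that membership of $\phi^*$ in $\mathcal{A}\subset W^{1,\infty}(\Omega)$ really requires a uniform $W^{1,\infty}$ bound on the minimizing sequence (which you supply by reading $\mathcal{A}$ as a box), while the paper simply appeals to ``closedness of $\mathcal{A}$''; your treatment is the more honest one on this point.
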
 
\begin{proof}
We apply the direct method in the calculus of variations to establish the assertion, and divide the proof into following 3 steps.

\emph{Step 1 (Extract minimizing sequences and their limits)}: Since the Ginzburg-Landau energy in \(\mathcal{W}\) is nonnegative and Assumption \ref{AssumptionJ} implies lower boundedness of $\mathcal{J}(\bm c)$, the total energy $\mathcal{W}(\phi, \bm c): \mathcal{A}\times H^1_{c, 1}(\Omega)\times H^1_{c, 2}(\Omega)\rightarrow \mathbb{R}$ is uniformly bounded below. Thus, we select a minimizing sequence $(\phi_n, \bm c_n, \psi_n)_{n\in \mathbb{N}} \in \mathcal{A}\times \left( H^1_{c, 1}(\Omega)\times H^1_{c, 2}(\Omega)\right) \times H_g^1(\Omega)$ such that
\begin{equation*}
\lim_{n\rightarrow \infty} \mathcal{W}(\phi_n, \bm c_n) = \underbrace{\inf}_{\phi\in \mathcal{A},\, (\bm c,\psi)\ {\rm satisfy}\ \eqref{weakPNP}} \mathcal{W}(\phi, \bm c)\geq -\mathcal{C}_0 >- \infty,
\end{equation*}
where \((\bm{c}_n, \psi_n)\) is the unique solution of \eqref{weakPNP} corresponding to \(\phi_n\) and $\bm c_n:= [c_{1,n}, c_{2,n}]^{\rm T}$. For any $\varepsilon_0 >0$, there exists an $N\in\mathbb{N}$ such that for any integer $n>N$
\begin{equation*}
-\mathcal{C}_0+\frac{\kappa}{2}\| \nabla \phi_n\|_{L^2(\Omega)}^2\leq \mathcal{W}(\phi_n, \bm c_n)\leq \underbrace{\inf}_{\phi\in \mathcal{A}, (\bm c,\psi)\ {\rm satisfy}\ \eqref{weakPNP}} \mathcal{W}(\phi, \bm c)+\varepsilon_0.
\end{equation*}
Thus, the sequence $\{\nabla \phi_n \}_{n> N}$ is uniformly bounded in $L^2 (\Omega)$. By the compact embedding Theorem $H^1(\Omega)\hookrightarrow L^2(\Omega)$ and the closedness of $\mathcal{A}$, the Banach-Alaoglu Theorem guarantees the existence of a subsequence (still denoted \(\{\phi_n\}_{n \in \mathbb{N}}\)) and an element \(\phi_* \in \mathcal{A}\) such that
\begin{equation*}
\phi_n\rightarrow\phi_*\ \text{in}\ L^2(\Omega),\ \phi_n\rightarrow\phi_*\ \text{in}\ L^1(\Omega),\ \phi_n \rightarrow \phi_*\ \text{a.e. in}\ \Omega,
\end{equation*}
where the a.e. convergence follows.
By the Slotboom transformation $c_{i,n} = \rho_{i,n} \exp(-z_i\psi_n-\alpha_0 \phi)$, we shall show that $\{\rho_{i,n}\}_{n\in\mathbb{N}}$ and $\{\psi_n\}_{n\in\mathbb{N}}$ are \emph{uniformly} bounded in $H^1(\Omega)$. 
By the continuous dependence of the solutions on the data (cf.~\cite[Lemma 3.2.3]{Markowich1986}), we have the estimates
\begin{equation}\label{rhoH1}
 \| \rho_{i,n}\|_{H^1(\Omega)} 
\leq   \mathcal{C}_i \| c_i^{\infty}\|_{H^{\frac{1}{2}}(\Gamma_{\rm in}\cup \Gamma_2)},
\end{equation}
where $\mathcal{C}_i$ $ (i=1,2)$ only depends on $\Omega$, $\alpha_0$, $D_0$, $D_m$, $U_+$, and $g$. Hence, the sequence $\{\rho_{i,n}\}_{n\in\mathbb{N}}$ is uniformly bounded in $H^1(\Omega)$. On the other hand, it follows from Lemma~\ref{PNPLinf} that the unique existence of $\psi_n$ holds. With the given $\psi_n$ and $L^\infty$ estimate in Lemma \ref{PNPLinf}, we treat the second equation of \eqref{PNPslot} as a Poisson equation and obtain by the continuous dependence of the solution on the right-hand side of the equation that
\begin{equation}\label{fowiej}
\begin{aligned}
\| \psi_n\|_{H^1(\Omega)}\leq \mathcal{C}_3( \exp{(-\underline{\psi})}\| \rho_{1,n}  \|_{L^2(\Omega)}+\exp{(\overline{\psi})}\| \rho_{2,n}  \|_{L^2(\Omega)}+\| g\|_{H^{\frac{1}{2}}(\Gamma_{\rm in}\cup \Gamma_2)}) < \infty,
\end{aligned}
\end{equation}
where $\mathcal{C}_3>0$ depends only on $\Omega$.

By the Banach-Alaoglu theorem, there exist subsequences (relabelled) such that
$$\rho_{i,n}\rightharpoonup \rho_i^*\ {\rm weakly}\  {\rm in}\ H^1(\Omega)\quad {\rm and}\quad\psi_n \rightharpoonup \psi^*\ {\rm weakly\ in}\ H^1(\Omega)\quad {\rm as}\ n\rightarrow\infty,$$ where $\rho_i^*\in H_{\rho,i}^1(\Omega)$ and $\psi^*\in H^1_g(\Omega)$. Next, we verify that the limit $(\rho_1^*,\rho_2^*,\psi^*)$ is indeed a solution of \eqref{PNPslot} for the prescribed $\phi_*\in \mathcal{A}$. By the compactness embedding $H^1(\Omega)\hookrightarrow L^4(\Omega)$, we further obtain convergence of subsequences: $$\rho_{i,n}\rightarrow \rho_i^*\ {\rm in} \ L^2(\Omega), \quad \psi_n \rightarrow \psi^* \ {\rm in}\ L^4(\Omega)\quad {\rm as}\ n\rightarrow\infty.$$

\emph{Step 2 (Verify the limits satisfying the PNP system)}: By the Cauchy-Schwarz inequality, subtracting the Poisson--Boltzmann type of equation in \eqref{PNPslot} for \(\phi_n\) and \(\phi^*\) yields
\begin{equation}\label{sss0}
\begin{aligned}
&\left|\int_{\Omega} \epsilon(\phi_n) \nabla \psi_n \cdot \nabla \zeta-\epsilon(\phi^*) \nabla \psi^* \cdot \nabla \zeta\dx\right| \\
\leq & \left|\int_{\Omega} (\epsilon(\phi_n)-\epsilon(\phi^*)) \nabla \psi_n \cdot \nabla \zeta\dx\right|+ \left|\int_{\Omega}\epsilon(\phi^*) (\nabla \psi_n - \nabla \psi^*)\cdot \nabla \zeta\dx\right|,\quad\forall \zeta\in H^1_{d,0}(\Omega).
\end{aligned}
\end{equation}
 For the first term on the right-hand side of \eqref{sss0}, by the Cauchy-Schwarz inequality, we have 
\begin{equation}\label{sss01}
 \left|\int_{\Omega} (\epsilon(\phi_n)-\epsilon(\phi^*)) \nabla \psi_n \cdot \nabla \zeta\dx\right|\leq  \| (\epsilon(\phi_n)-\epsilon(\phi^*))\nabla \zeta \|_{L^2(\Omega)}\| \nabla\psi_n \|_{L^2(\Omega)}.
\end{equation}
Since $\epsilon(\cdot)$ is continuous, the integrand $(\epsilon(\phi_n)-\epsilon(\phi^*))^2\vert\nabla \zeta\vert^2$ converges to $0$ a.e. in $\Omega$ and
\begin{equation*}
 \vert \epsilon(\phi_n)-\epsilon(\phi^*) \vert^2\vert \nabla \zeta\vert^2 \leq 4\left\vert \epsilon_m+\epsilon_0\right\vert^2 \vert \nabla \zeta\vert^2\in L^1(\Omega).   
\end{equation*}
 By the Lebesgue convergence Theorem, we obtain that the right-hand side of \eqref{sss01} tends to zero as $n\rightarrow \infty$, since the sequence $\{\nabla\psi_n\}_{n\in \mathbb{N}}$ is uniformly bounded in $L^2(\Omega)$ by \eqref{fowiej}. Moreover, given that \(\psi_n \rightharpoonup \psi^*\) weakly in \(H^1(\Omega)\), the right-hand side of \eqref{sss0} also tends to zero as \(n \to \infty\).

On the other hand, for all $\zeta\in H^1_{d,0}(\Omega)$, using the Cauchy-Schwarz inequality, we have
\begin{equation}\label{sss2}
\begin{aligned}
&\left|\int_{\Omega} \exp{(\alpha_0\phi_n)} \big(\sum_{i=1}^2 z_i \rho_{i,n} \exp{(- z_i \psi_n)}\big)\zeta - \exp{(\alpha_0\phi^*)}\big(\sum_{i=1}^2 z_i \rho_{i}^* \exp{(- z_i \psi^*)}\big) \zeta \dx\right|\\
\leq &\left|\int_{\Omega} \exp{(\alpha_0\phi_n)} \big(\sum_{i=1}^2 z_i \rho_{i,n} \exp{(- z_i \psi_n)}\big)\zeta - \exp{(\alpha_0\phi^*)}\big(\sum_{i=1}^2 z_i \rho_{i,n} \exp{(- z_i \psi_n)}\big) \zeta \dx\right| \\
&+\left|\int_{\Omega} \exp{(\alpha_0\phi^*)} \big(\sum_{i=1}^2 z_i \rho_{i,n} \exp{(- z_i \psi_n)}\big)\zeta - \exp{(\alpha_0\phi^*)}\big(\sum_{i=1}^2 z_i \rho_{i}^* \exp{(- z_i \psi^*)}\big) \zeta \dx\right|\\
=&I_1+I_2.
\end{aligned}
\end{equation}
The integrand of $I_1$ has a dominated function $2\vert \exp{(\alpha_0)}-\exp{(-\alpha_0)}\vert\exp{(U_+ + \max\{\overline{\psi}, -\underline{\psi}\})}\zeta\in L^1(\Omega)$ due to the $L^\infty$ estimates of $\rho_{i,n}$ and $\psi_n$. Since the integrand of $I_1$ converges to $0$ a.e. in $\Omega$, the Lebesgue convergence Theorem implies $I_1\rightarrow 0$ as $n\rightarrow \infty$. For the rest one, we similarly have
\begin{equation}\label{sss22}
\begin{aligned}
I_2\leq&\mathcal{C}_4\left|\int_{\Omega}\big(\sum_{i=1}^2 z_i (\rho_{i,n}-\rho_i^*) \exp{(- z_i \psi_n)}\big)\zeta\dx\right| \\
&+ \mathcal{C}_4\left|\int_{\Omega} \big(\sum_{i=1}^2 z_i \rho_{i}^* [\exp{(- z_i \psi_n)}-\exp{(- z_i \psi^*)}]\big) \zeta \dx\right|\\
\leq& \mathcal{C}_4\sum_{i=1}^2 \|\rho_{i,n}-\rho_i^*\|_{L^2(\Omega)}\|\exp(-z_i \psi_n)\zeta\|_{L^2(\Omega)}\\
&+\mathcal{C}_4\sum_{i=1}^2 \| \rho_i^* \zeta\|_{L^2(\Omega)}\|\exp{(- z_i \psi_n)}-\exp{(- z_i \psi^*)}\|_{L^2(\Omega)},
\end{aligned}
\end{equation}
where $\mathcal{C}_4>0$ depends only on $\Omega, \alpha_0$. The first term on the right-hand side of \eqref{sss22} goes to zero since $\|\exp(-z_i \psi_n)\zeta\|_{L^2(\Omega)}\leq \overline{\mathcal{E}}_i \|\zeta\|_{L^2(\Omega)}$. For the second term, using the Hölder inequality and the compact embedding $H^1(\Omega)\hookrightarrow L^4(\Omega)$, we deduce $$\| \rho_i^* \zeta\|_{L^2(\Omega)}\leq \|\rho_i^*\|_{L^4(\Omega)}\|\zeta\|_{L^4(\Omega)}\leq \|\rho_i^*\|_{H^1(\Omega)}\|\zeta\|_{H^1(\Omega)}.$$ Furthermore, using the convexity and monotonicity of the exponential function, we obtain
\begin{equation}
\begin{aligned}
\|\exp{(- z_i \psi_n)}-\exp{(- z_i \psi^*)}\|_{L^2(\Omega)}\leq \overline{\mathcal{E}}_i\|\psi^* - \psi_n  \|_{L^2(\Omega)}.
\end{aligned}
\end{equation}
Passing the limit $n\rightarrow \infty$, the right-hand side of \eqref{sss2} tends to zero.

Subtracting the continuity equations in \eqref{PNPslot} corresponding to \(\phi_n\) and \(\phi^*\), we obtain
\begin{equation}\label{sss1}
\begin{aligned}
&\left\vert\int_\Omega D(\phi_n)\exp{(\alpha_0 \phi_n)} \exp{(-z_i\psi_n)} \nabla \rho_{i,n}\cdot \nabla s- D(\phi^*)\exp{(\alpha_0 \phi^*)} \exp{(-z_i\psi^*)} \nabla \rho_{i}^*\cdot \nabla s\dx\right\vert \\
&\leq \left\vert\int_\Omega [D(\phi_n)\exp{(\alpha_0 \phi_n)}-D(\phi^*)\exp{(\alpha_0 \phi^*)}] \exp{(-z_i\psi_n)} \nabla \rho_{i,n}\cdot \nabla s \dx\right\vert \\
& \quad+ \left\vert\int_\Omega D(\phi^*)\exp{(\alpha_0 \phi^*)} [\exp{(-z_i\psi_n)}-\exp{(-z_i\psi^*)}] \nabla \rho_{i,n}\cdot \nabla s \dx\right\vert\\
& \quad+ \left\vert\int_\Omega D(\phi^*)\exp{(\alpha_0 \phi^*)}\exp{(-z_i\psi^*)} (\nabla \rho_{i,n}-\nabla \rho_{i}^*)\cdot \nabla s \dx\right\vert,\quad \forall s \in H^1_{d,0}(\Omega)\\
&= I_4+ I_5+ I_6.
\end{aligned}
\end{equation}
For the first term on the right-hand side of \eqref{sss1}, by the Cauchy-Schwarz inequality, we have
\begin{equation}\label{sss3}
\begin{aligned}
I_4 \leq \overline{\mathcal{E}}_i\|(D(\phi_n)\exp{(\alpha_0 \phi_n)}-D(\phi^*)\exp{(\alpha_0 \phi^*)})\nabla s\|_{L^2(\Omega)}\| \nabla \rho_{i,n}\|_{L^2(\Omega)}.
\end{aligned}
\end{equation}
Since $D(\cdot)$ is continuous, the integrand $(D(\phi_n)\exp{(\alpha_0\phi_n)}-D(\phi^*)\exp{(\alpha_0\phi^*)})^2\vert\nabla s\vert^2$ converges to $0$ a.e. in $\Omega$. In addition, 
\begin{equation*}
 \vert D(\phi_n)\exp{(\alpha_0\phi_n)}-D(\phi^*)\exp{(\alpha_0\phi^*)} \vert^2\vert \nabla s\vert^2 \leq 4\left\vert D_m+ D_0\right\vert^2\exp{(2\alpha_0)} \vert \nabla s\vert^2\in L^1(\Omega).   
\end{equation*}
By the Lebesgue convergence Theorem, we obtain that the right-hand side of \eqref{sss3} tends to zero as $n\rightarrow \infty$, as $\rho_{i,n}$ are uniformly bounded in $H^1(\Omega)$ according to \eqref{rhoH1}. For the second term on the right-hand side of \eqref{sss1}, we estimate it as
\begin{equation}
\begin{aligned}
I_5&\leq  D_0\exp{(\alpha_0)} \Vert \exp{(-z_i\psi_n)}-\exp{(-z_i\psi^*)}\Vert_{L^4(\Omega)}\| \nabla \rho_{i,n}\|_{L^4(\Omega)}\| \nabla s\|_{L^2(\Omega)} \\
&\leq  D_0\overline{\mathcal{E}}_i \exp{(\alpha_0)}\Vert\psi_n-\psi^*\Vert_{L^4(\Omega)}\| \nabla \rho_{i,n}\|_{H^1(\Omega)}\| \nabla s\|_{L^2(\Omega)}, 
\end{aligned}
\end{equation}
where the compact embedding $H^1(\Omega)\hookrightarrow L^4(\Omega)$ is applied. Passing $n\rightarrow \infty$, $I_5$ and $I_6$ tend to zero due to the fact that $\rho_{i,n}\in H^2(\Omega)$ and $\rho_{i,n}\rightharpoonup \rho_i^*\ {\rm weakly}\  {\rm in}\ H^1(\Omega)$. Therefore, for each solution $(\rho_{i,n},\psi_n)$ solving \eqref{PNPslot} corresponding to \(\phi_n\), taking the limit $n\rightarrow \infty$ yields that the limit functions $(\rho_{i}^*,\psi^*)$ solve \eqref{PNPslot} as well.  Moreover, by Lemma \ref{PNPLinf}, it follows from $(\rho_{1,n},\rho_{2,n},\psi_n)\in \big[H^2(\Omega)\big]^3$ and $(\rho_{1,n},\rho_{2,n},\psi_{n}) \to (\rho_1^*,\rho_2^*,\psi^*)$ in $L^2(\Omega)$ that $(\rho_1^*,\rho_2^*,\psi^*)$ satisfy the corresponding boundary conditions as well. 
According to the Lemma~\ref{PNPLinf}, the uniqueness of the solution to the problem \eqref{PNPslot} guarantees that $(\rho_1^*, \rho_2^*, \psi^*)$ solves the problem given $\phi^*\in \mathcal{A}$.  

According to the Slotboom transformation \eqref{SoltTrans}, $c_i^*=\rho_i^* \exp(-z_i \psi^*+\alpha_0\phi^*)$ satisfy
\begin{equation*}
\begin{aligned}
\| c_{i,n}-c_i^*\|_{L^2(\Omega)}\leq& \overline{\mathcal{E}}_i\exp{(\alpha_0)} \|\rho_{i,n}-\rho_i^*\|_{L^2(\Omega)}+\exp(U_+)\exp{(\alpha_0)}\overline{\mathcal{E}}_i\| \psi_n-\psi^*\|_{L^2(\Omega)}\\
&+\exp{(U_+ + \overline{\mathcal{E}}_i)}\| \exp{(\alpha_0 \phi^*)}- \exp{(\alpha_0 \phi^*)}\|_{L^2 (\Omega)}\rightarrow 0\quad{\rm as}\ n\rightarrow\infty.
\end{aligned}
\end{equation*}
Therefore, the limit functions $(\phi^*, c_{i}^*,\psi^*)$ are the solutions of \eqref{weakPNP}.

\emph{Step 3 (Show minimizer via semicontinuity)}: Since $\sup_{n\in \mathbb{N}}\| \omega(\phi_n)\|_{L^{\infty}(\Omega)}<\infty$, it follows from the Lebesgue's dominated convergence theorem \cite{Garcke2015} that $$\lim_{n\rightarrow\infty}\int_{\Omega}\omega(\phi_n)\dx=\int_{\Omega}\omega(\phi^*)\dx.$$ The convexity of the gradient term leads to the weak lower semicontinuity of the Ginzburg-Landau energy functional:
\begin{equation*}
\int_{\Omega} \frac{\kappa}{2}\vert \nabla \phi^*\vert^2+\frac{1}{\kappa}\omega(\phi^*)\dx\leq \lim \inf_{\!\!\!\!\!\!\!\!n\rightarrow \infty} \int_{\Omega} \frac{\kappa}{2}\vert \nabla \phi^n\vert^2+\frac{1}{\kappa}\omega(\phi^n)\dx.
\end{equation*}
Finally, by the linearity of $\mathcal{J}(\bm c)$ with $\bm c^*:=[c_1^*, c_2^*]^{\rm T}$, we obtain:
\begin{equation*}
\mathcal{W}(\phi^*,\bm c^*)\leq \lim \inf_{\!\!\!\!\!\!\!\!n\rightarrow \infty}  \mathcal{W}(\phi_n, \bm c_n)\leq \underbrace{\inf}_{\phi\in \mathcal{A}, (\bm c,\psi)\ {\rm satisfy}\ \eqref{weakPNP}} \mathcal{W}(\phi, \bm c),
\end{equation*}
which proves that $(\phi^*,\bm c^*)$ is a minimizer of \eqref{TopOpt} subject to \eqref{weakPNP}.
\end{proof}
\section{Optimization method}
In topology optimization,  besides geometric constraints, the problem generally involves physical state constraints.  Treating the state system as an equality constraint and applying the Lagrange multiplier method typically lead to the introduction of dual variables \cite[Chapter 10]{DelZol} that are solutions of an adjoint system. Then sensitivity analysis is performed via the Gâteaux derivative of the objective w.r.t. the phase field function (cf.  \cite[Appendix A]{JCP2010PF} for sensitivity analysis in structural optimization).  
\subsection{Sensitivity analysis}
We first derive the adjoint system and then show the variational derivative of the objective.
\begin{Lemma}
Let $(c_1, c_2, \psi)\in H_{c,1}^1(\Omega)\times H_{c,2}^1(\Omega)\times H^1_g(\Omega)$ be the solution of the modified steady-state PNP system (\ref{weakPNP}). The adjoint system for the optimization problem \eqref{TopOpt} subject to \eqref{weakPNP} reads: Find $(\bm s, \zeta)\in \textbf{H}^1_{d,0}(\Omega)\times H^1_{d,0}(\Omega)$ such that
\begin{equation}\label{adjweak}
\left\{
\begin{aligned}
&\int_{\Omega} D(\phi) \nabla s_i \cdot (\nabla q + z_i q \nabla\psi - \alpha_0 q \nabla \phi)+z_i q -z_i\zeta q \dx = 0,\ i=1,2\ &&\forall q\in H^1_{d,0}(\Omega), \\
& \int_{\Omega} \epsilon(\phi) \nabla \zeta\cdot \nabla \eta \dx + \sum_{i=1}^2 \int_{\Omega} D(\phi)z_i c_i \nabla s_i \cdot \nabla \eta \dx =0 &&\forall \eta \in H^1_{d,0}(\Omega),
\end{aligned}\right.
\end{equation}
where the vectorial function $\bm s=[s_1,s_2]^{\rm T}$.
\end{Lemma}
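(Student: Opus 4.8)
The plan is to obtain \eqref{adjweak} as the first-order stationarity conditions, with respect to the state variables, of a Lagrangian that appends the weak PNP system \eqref{weakPNP} to the reduced objective. Introducing adjoint variables $\bm s=[s_1,s_2]^{\rm T}$ with $s_i\in H^1_{d,0}(\Omega)$ and $\zeta\in H^1_{d,0}(\Omega)$ — the homogeneous trace spaces being dictated by the fact that admissible variations of $c_i$ and $\psi$ must vanish on $\Gamma_{\rm in}\cup\Gamma_2$, where their Dirichlet data are prescribed — I would set
\begin{equation*}
\mathcal{L}(\phi,\bm c,\psi;\bm s,\zeta):=\mathcal{W}(\phi,\bm c)-\sum_{i=1}^{2}\int_\Omega D(\phi)\big(\nabla c_i+z_i c_i\nabla\psi-\alpha_0 c_i\nabla\phi\big)\cdot\nabla s_i\dx-\int_\Omega\epsilon(\phi)\nabla\psi\cdot\nabla\zeta\dx+\int_\Omega\Big(\sum_{i=1}^{2}z_i c_i\Big)\zeta\dx.
\end{equation*}
By construction, whenever $(\bm c,\psi)$ solves \eqref{weakPNP} the last three integrals vanish for all $(\bm s,\zeta)\in\textbf{H}^1_{d,0}(\Omega)\times H^1_{d,0}(\Omega)$, so $\mathcal{L}$ agrees with $\mathcal{W}$ along the state manifold, and the adjoint equations are exactly the conditions $\partial_{c_i}\mathcal{L}=0$ and $\partial_\psi\mathcal{L}=0$ that annihilate the state-sensitivity contributions in the total derivative of $\mathcal{W}$ with respect to $\phi$.

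Next I would carry out the differentiation, which is routine since $(\bm c,\psi)$ enters $\mathcal{W}$ only linearly — through $\mathcal{J}(\bm c)=\int_\Omega-\sum_i z_i c_i\dx$ — and enters the constraint terms bilinearly, so $\mathcal{L}$ is smooth in the state and all directional derivatives exist classically. The variation in $c_i$ along $q\in H^1_{d,0}(\Omega)$ collects three terms: $-\int_\Omega z_i q\dx$ from $\mathcal{J}$, $-\int_\Omega D(\phi)\nabla s_i\cdot(\nabla q+z_i q\nabla\psi-\alpha_0 q\nabla\phi)\dx$ from the Nernst–Planck constraint, and $\int_\Omega z_i q\zeta\dx$ from the charge term; equating the sum to zero and rearranging yields precisely the first equation of \eqref{adjweak}. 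The variation in $\psi$ along $\eta\in H^1_{d,0}(\Omega)$ collects $-\sum_i\int_\Omega D(\phi)z_i c_i\nabla s_i\cdot\nabla\eta\dx$ from the Nernst–Planck terms and $-\int_\Omega\epsilon(\phi)\nabla\zeta\cdot\nabla\eta\dx$ from the Poisson term, and setting it to zero gives the second equation of \eqref{adjweak}.

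To put this on rigorous footing I would, in addition, record Fréchet differentiability of the control-to-state map $\phi\mapsto(\bm c(\phi),\psi(\phi))$ and unique solvability of the adjoint system in $\textbf{H}^1_{d,0}(\Omega)\times H^1_{d,0}(\Omega)$; both reduce to invertibility of the linearized PNP operator at the solution of \eqref{weakPNP}, the adjoint operator being its transpose, and this invertibility is available near the isolated solution branch furnished by Lemma \ref{PNPLinf} — the same place the smallness of $g$ and the $L^\infty$-bounds enter. Alternatively, solvability of \eqref{adjweak} can be argued directly: testing the $s_i$-equation with $q=\tilde q\exp(-(z_i\psi-\alpha_0\phi))$, a bijection of $H^1_{d,0}(\Omega)$ by the boundedness above and below of the exponential, makes that equation coercive in $s_i$ for fixed $\zeta$, with constant controlled by $D_m$, the Poincaré constant of $H^1_{d,0}(\Omega)$ (valid since $\Gamma_{\rm in}\cup\Gamma_2$ has positive measure by \textbf{(H1)}), and the $L^\infty$-bound on $\psi$; this defines $s_i$ affinely in $\zeta$, and substituting back leaves a linear elliptic problem for $\zeta$ with leading part $\int_\Omega\epsilon(\phi)\nabla\zeta\cdot\nabla\eta\dx$ plus a lower-order coupling, to which Lax–Milgram applies once that coupling is dominated. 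I expect controlling this coupling — equivalently, invertibility of the linearization — to be the main obstacle, and the pointwise bounds of Lemma \ref{PNPLinf} together with the smallness of $g$ are exactly the ingredients needed to close it.
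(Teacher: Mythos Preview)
Your proposal is correct and follows essentially the same Lagrangian approach as the paper: form a Lagrangian by appending the weak PNP constraints to the objective, then read off \eqref{adjweak} as the stationarity conditions with respect to the state variables $c_i$ and $\psi$. The only cosmetic difference is that you include the full $\mathcal{W}$ in the Lagrangian whereas the paper uses only $\mathcal{J}$; since the Ginzburg--Landau part is independent of $(\bm c,\psi)$ this is immaterial. Your additional discussion of solvability of \eqref{adjweak} via the Slotboom-type test-function substitution and the $L^\infty$-bounds of Lemma~\ref{PNPLinf} goes beyond what the paper actually proves---the paper's argument is purely a formal derivation of the adjoint equations and does not address their well-posedness.
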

\begin{proof}
Introduce a Lagrangian combining both the objective \eqref{Objfun} and PDE constraints \eqref{weakPNP}:
\begin{equation*}
\begin{aligned}
\mathcal{L}(\phi, \bm w, \bm v, \tau, \xi)
=&-\sum_{i=1}^2\int_{\Omega} z_i w_i \dx-\sum_{i=1}^2\int_{\Omega}D(\phi)(\nabla w_i + z_i w_i \nabla \tau- \alpha_0 w_i \nabla\phi)\cdot \nabla v_i \dx \\
&-\int_{\Omega} \epsilon(\phi)\nabla\tau\cdot \nabla \xi- \big(\sum_{i=1}^2 z_i w_i\big)\xi\dx,
\end{aligned}
\end{equation*}
where $w_i\in {H}^1_{c,i}(\Omega)\ (i=1,2),\tau\in H^1_{g}(\Omega)$, and $(\bm v, \xi)\in \textbf{H}^1_{d,0}(\Omega)\times H^1_{d,0}(\Omega)$ with $\bm v= [v_1,v_2]^{\rm T}$ and $\bm w=[w_1, w_2]^{\rm T}$. A saddle point of $\mathcal{L}$ is characterized by
\begin{equation}\label{KKTcon1}
\frac{\partial \mathcal{L}}{\partial \bm w}(\delta_{\bm w})=\frac{\partial \mathcal{L}}{\partial \tau}(\delta_{\tau})=0 \quad \forall \delta_{\bm w}=[\delta_{w_1},\delta_{w_2}]^{\rm T} \in \textbf{H}^1_{d,0}(\Omega), \delta_{\tau}\in H^1_{d,0}(\Omega).
\end{equation}
Eq. \eqref{KKTcon1} corresponds to the adjoint state system \eqref{adjweak} as derived below. The Gâteaux derivative of $\mathcal{L}$ w.r.t. $w_i$ yields the first-order necessary optimality condition
\begin{equation*}
\begin{aligned}
0 =& -\int_{\Omega} z_i \delta_{w_i}\dx - \int_{\Omega} D(\phi)(\nabla \delta_{w_i}+z_i \delta_{w_i} \nabla\tau - \alpha_0 \delta_{w_i}\nabla \phi)\cdot\nabla v_i\dx +\int_{\Omega} z_i \delta_{w_i}  \xi \dx.
\end{aligned}
\end{equation*}
Similarly, the Gâteaux derivative of the Lagrangian $\mathcal{L}$ w.r.t. $\tau$ leads to the optimality condition
\begin{equation*}
0 = -\sum_{i=1}^2 \int_{\Omega} D(\phi)z_i w_i \nabla \delta_{\tau} \cdot \nabla v_i \dx - \int_{\Omega} \epsilon(\phi) \nabla \delta_{\tau} \cdot \nabla \xi\dx,
\end{equation*}
where $(w_1, w_2, \tau)$ corresponds to the solution $(c_1, c_2, \psi)$ of the PNP system (\ref{weakPNP}) at the saddle point. This completes the derivation of the adjoint system in a weak form.
\end{proof}

For $v\in H^1(\Omega)$, we denote by $\langle v^\prime(\phi), \theta\rangle$ the Gâteaux derivative of $v$ at $\phi$ in the direction $\theta$. We have the following result on the Gâteaux derivative of the objective. 
\begin{Theorem}\label{sensitivityj}
Let $(\bm c,\psi)\in \left(H_{c,1}^1(\Omega)\times H_{c,2}^1(\Omega)\right)\times H^1_g(\Omega)$ and $(\bm s,\zeta)\in\textbf{H}^1_{d,0}(\Omega)\times H^1_{d,0}(\Omega)$ be the solutions of the steady-state PNP system (\ref{weakPNP}) and the adjoint system (\ref{adjweak}), respectively. Then the Gâteaux derivative of the objective $\mathcal{J}(\bm c(\phi))$ w.r.t. $\phi$ in the direction $\theta\in H^1(\Omega)$ reads
\begin{equation*}
\begin{aligned}
\big\langle\mathcal{J}^{\prime}(\bm c(\phi)), \theta\big\rangle=& -\sum_{i=1}^2\int_{\Omega} \langle D^\prime(\phi), \theta\rangle(\nabla c_i + z_i c_i \nabla \psi -\alpha_0 c_i \nabla \phi)\cdot \nabla s_i \dx-\int_{\Omega} \langle \epsilon^\prime(\phi),\theta\rangle\nabla\psi\cdot \nabla \zeta\dx\\
& +\sum_{i=1}^2\int_{\Omega}D(\phi)\alpha_0 c_i \nabla \theta\cdot \nabla s_i \dx.
\end{aligned}
\end{equation*}
\end{Theorem}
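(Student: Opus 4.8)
The plan is to use the Lagrangian/adjoint formalism already set up in the lemma that produced \eqref{adjweak}. The starting point is the observation that, since $(\bm c,\psi)=(\bm c(\phi),\psi(\phi))$ solves the weak PNP system \eqref{weakPNP}, the two PDE-constraint integrals in $\mathcal{L}(\phi,\bm w,\bm v,\tau,\xi)$ vanish identically when $(\bm w,\tau)=(\bm c(\phi),\psi(\phi))$, for \emph{every} test pair $(\bm v,\xi)\in\textbf{H}^1_{d,0}(\Omega)\times H^1_{d,0}(\Omega)$; consequently
$$\mathcal{J}(\bm c(\phi))=\mathcal{L}\big(\phi,\bm c(\phi),\bm s,\psi(\phi),\zeta\big)\qquad\text{for all }(\bm s,\zeta)\in\textbf{H}^1_{d,0}(\Omega)\times H^1_{d,0}(\Omega).$$
Differentiating this identity in the direction $\theta\in H^1(\Omega)$ (with $\theta\vert_{\Gamma_{\rm in}\cup\Gamma_2}=0$, so that $\langle\bm c'(\phi),\theta\rangle$ and $\langle\psi'(\phi),\theta\rangle$ lie in $H^1_{d,0}(\Omega)$ since the Dirichlet data of $\bm c$ and $\psi$ do not depend on $\phi$) and applying the chain rule gives
$$\big\langle\mathcal{J}'(\bm c(\phi)),\theta\big\rangle=\partial_\phi\mathcal{L}(\theta)+\partial_{\bm w}\mathcal{L}\big(\langle\bm c'(\phi),\theta\rangle\big)+\partial_\tau\mathcal{L}\big(\langle\psi'(\phi),\theta\rangle\big).$$
The entire purpose of the adjoint system \eqref{adjweak} is that, when $(\bm s,\zeta)$ is chosen to be its solution (evaluated at the current state $(\bm c(\phi),\psi(\phi))$), one has $\partial_{\bm w}\mathcal{L}=0$ on $\textbf{H}^1_{d,0}(\Omega)$ and $\partial_\tau\mathcal{L}=0$ on $H^1_{d,0}(\Omega)$ — these are precisely the first and second equations of \eqref{adjweak}, as recorded in the proof of that lemma. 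Hence the two implicit-derivative terms drop out and $\langle\mathcal{J}'(\bm c(\phi)),\theta\rangle=\partial_\phi\mathcal{L}(\theta)$.

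It then remains to compute the \emph{explicit} $\phi$-derivative of $\mathcal{L}$, which only touches the three $\phi$-dependent ingredients: the coefficient $D(\phi)$, the drift contribution $-\alpha_0 w_i\nabla\phi$ inside the first constraint, and the coefficient $\epsilon(\phi)$. Differentiating and substituting $w_i=c_i$, $\tau=\psi$, $v_i=s_i$, $\xi=\zeta$ yields the three terms of the asserted formula: $-\sum_i\int_\Omega\langle D'(\phi),\theta\rangle(\nabla c_i+z_ic_i\nabla\psi-\alpha_0 c_i\nabla\phi)\cdot\nabla s_i\,\mathrm{d}x$ from $D(\phi)$, $+\sum_i\int_\Omega D(\phi)\alpha_0 c_i\nabla\theta\cdot\nabla s_i\,\mathrm{d}x$ from $-\alpha_0 w_i\nabla\phi$, and $-\int_\Omega\langle\epsilon'(\phi),\theta\rangle\nabla\psi\cdot\nabla\zeta\,\mathrm{d}x$ from $\epsilon(\phi)$. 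Concretely the steps are: (i) establish Gâteaux differentiability of the control-to-state map $\phi\mapsto(\bm c(\phi),\psi(\phi))$ into $\big[H^1(\Omega)\big]^3$; (ii) record that $D(\cdot)$ and $\epsilon(\cdot)$ are differentiable in the sense needed for the chain rule, with $\langle D'(\phi),\theta\rangle=p\phi^{p-1}(D_0-D_m)\theta$ and $\langle\epsilon'(\phi),\theta\rangle=p\phi^{p-1}(\epsilon_0-\epsilon_m)\theta$; (iii) differentiate the Lagrangian identity above; (iv) kill the implicit terms using \eqref{adjweak}; (v) simplify $\partial_\phi\mathcal{L}(\theta)$.

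The main obstacle is step (i): differentiating the nonlinear, coupled steady-state PNP system. The natural route is to linearize \eqref{weakPNP} around $(\bm c(\phi),\psi(\phi))$ — or, more conveniently, to linearize the Slotboom form \eqref{PNPslot}, which is triangular up to the exponential couplings — and to show that the linearized operator is an isomorphism on $\big[H^1_{d,0}(\Omega)\big]^3$. Coercivity of the linearized Poisson--Boltzmann block follows from $\epsilon(\phi)\ge\epsilon_0>0$ and $D(\phi)\ge D_m>0$ exactly as in Lemma~\ref{PNPLinf}; invertibility of the full system, hence applicability of the implicit function theorem and differentiability of $\phi\mapsto(\bm c(\phi),\psi(\phi))$ with uniform $H^1$-bounds on the derivative, again uses the smallness hypothesis on $g$ from Lemma~\ref{PNPLinf}, under which the reaction and coupling terms are a small perturbation of the invertible decoupled operator. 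A secondary, purely technical point is that the clipped interpolation functions $D(\phi),\epsilon(\phi)$ fail to be $C^1$ where $\phi^p\in\{0,1\}$; this is harmless because only the directional derivative is needed and, for $\phi\in\mathcal{A}$, the exceptional set has Lebesgue measure zero, so the expressions for $\langle D'(\phi),\theta\rangle$ and $\langle\epsilon'(\phi),\theta\rangle$ hold a.e.\ in $\Omega$ and the integrals in the statement are well defined. Once (i) is secured, the remaining steps are the routine bookkeeping indicated above.
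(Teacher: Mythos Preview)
Your proposal is correct and uses essentially the same adjoint/Lagrangian machinery as the paper. The only difference is presentational: rather than invoking the identity $\mathcal{J}(\bm c(\phi))=\mathcal{L}(\phi,\bm c(\phi),\bm s,\psi(\phi),\zeta)$ and differentiating it directly, the paper starts from $\langle\mathcal{J}',\theta\rangle=-\sum_i\int_\Omega z_i\langle c_i'(\phi),\theta\rangle\,\mathrm{d}x$, differentiates the state equations \eqref{weakPNP} in $\phi$, and then substitutes the adjoint equations \eqref{adjweak} with test functions $q=\langle c_i'(\phi),\theta\rangle$ and $\eta=\langle\psi'(\phi),\theta\rangle$ to eliminate the implicit state derivatives --- the same cancellations you obtain in one stroke via $\partial_{\bm w}\mathcal{L}=\partial_\tau\mathcal{L}=0$.
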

\begin{proof}
The Gâteaux derivative of the objective $\mathcal{J}(\bm c(\phi))$ w.r.t. the phase field function $\phi$ in the direction $\theta\in H^1(\Omega)$ yields
\begin{equation}\label{Jprime}
\begin{aligned}
\big\langle\mathcal{J}^{\prime}(\bm c(\phi)), \theta\big\rangle =  \int_{\Omega}j^\prime(x, \bm c(\phi))\langle \bm c^{\prime}(\phi),\theta\rangle \dx 
=  -\sum_{i=1}^2 \int_{\Omega} z_i \langle c_i^\prime(\phi), \theta\rangle \dx,
\end{aligned}
\end{equation}
where $j^\prime$ denotes the derivative w.r.t. the second variable. By utilizing the adjoint system \eqref{adjweak} and taking the test function $q = \langle c_i^\prime(\phi), \theta\rangle$ therein, we derive
\begin{equation}\label{s1}
\begin{aligned}
&\sum_{i=1}^2 \int_{\Omega} D(\phi) \nabla s_i \cdot \big(\nabla \langle c_i^\prime(\phi), {\theta}\rangle + z_i \langle c_i^\prime(\phi), {\theta}\rangle \nabla\psi-\alpha_0 \langle c_i^\prime(\phi), {\theta}\rangle\nabla\phi \big) \dx \\
&\quad= \sum_{i=1}^2\int_{\Omega} z_i\zeta \langle c_i^\prime(\phi), {\theta}\rangle-z_i \langle c_i^\prime(\phi), {\theta}\rangle \dx.
\end{aligned}
\end{equation}
Differentiating the Nernst--Planck equations in \eqref{weakPNP} w.r.t. \(\phi\) in the direction \(\theta \in H^1(\Omega)\) yields
\begin{equation}\label{s2}
\begin{aligned}
&\sum_{i=1}^2\int_{\Omega} [\langle D^\prime(\phi), {\theta}\rangle(\nabla c_i + z_i c_i \nabla \psi - \alpha_0 c_i \nabla \phi) +D(\phi)\big(\nabla \langle c_i^\prime(\phi), {\theta}\rangle + z_i \langle c_i^\prime(\phi), {\theta}\rangle \nabla \psi\big)]\cdot \nabla s_i \dx \\
&-\sum_{i=1}^2\int_\Omega D(\phi)\alpha_0\langle c_i^\prime(\phi), {\theta}\rangle\nabla \phi\cdot \nabla s_i + D(\phi)\alpha_0 c_i \nabla\theta\cdot\nabla s_i \dx\\
&+\sum_{i=1}^2\int_{\Omega}D(\phi)z_i c_i \nabla \langle \psi^\prime(\phi), {\theta}\rangle \cdot \nabla s_i \dx = 0,
\end{aligned}
\end{equation}
where $(s_1, s_2,\zeta)\in [{H}^1_{d,0}(\Omega)]^3$ solves the adjoint system \eqref{adjweak}. Similarly, differentiating the Poisson equation in \eqref{weakPNP} w.r.t. $\phi$ gives
\begin{equation}\label{s3}
\begin{aligned}
\int_{\Omega} \langle \epsilon^\prime(\phi),\theta\rangle\nabla\psi\cdot \nabla \zeta
+\epsilon(\phi)\nabla\langle\psi^\prime(\phi), \theta\rangle\cdot \nabla \zeta- \bigg(\sum_{i=1}^2 z_i \langle c_i^\prime(\phi), {\theta}\rangle\bigg)\zeta\dx= 0.
\end{aligned}
\end{equation}
Using \eqref{s1} and combining \eqref{Jprime}, the Gâteaux derivative can be rewritten as
\begin{equation*}
\begin{aligned}
\big\langle\mathcal{J}^{\prime}(\bm c(\phi)), \theta\big\rangle
=\sum_{i=1}^2 \int_{\Omega} D(\phi) \nabla s_i \cdot \big[\nabla \langle c_i^\prime(\phi), {\theta}\rangle +  \langle c_i^\prime(\phi), {\theta}\rangle (z_i\nabla\psi-  \alpha_0\nabla\phi) \big] -z_i\zeta \langle c_i^\prime(\phi), {\theta}\rangle \dx.
\end{aligned}
\end{equation*}
Furthermore, combining with \eqref{s2} and \eqref{s3}, we obtain
\begin{equation}\label{s4}
\begin{aligned}
\big\langle\mathcal{J}^{\prime}(\bm c(\phi)), \theta\big\rangle
=& -\sum_{i=1}^2\int_{\Omega} \langle D^\prime(\phi), \theta\rangle(\nabla c_i + z_i c_i \nabla \psi-\alpha_0 c_i \nabla \phi)\cdot \nabla s_i \dx \\
&+\sum_{i=1}^2\int_{\Omega}D(\phi)\alpha_0 c_i \nabla \theta\cdot \nabla s_i \dx -\sum_{i=1}^2\int_{\Omega}D(\phi)z_i c_i \nabla \langle \psi^\prime(\phi), \theta\rangle \cdot \nabla s_i \dx\\
&-\int_{\Omega} \langle \epsilon^\prime(\phi),\theta\rangle\nabla\psi\cdot \nabla \zeta+ \epsilon(\phi)\nabla\langle\psi^\prime(\phi), \theta\rangle\cdot \nabla \zeta \dx.
\end{aligned}
\end{equation}
Taking the test function \(\eta = \langle \psi^\prime(\phi), \theta \rangle\) for the adjoint system \eqref{adjweak} and substituting into \eqref{s4} completes the derivation.
\end{proof}
\subsection{Gradient flow minimization}
Given an energy functional bounded below, incorporating a volume constraint as a penalty term
\begin{equation}\label{modiEner}
\hat{\mathcal{W}}(\phi):=\mathcal{W}(\phi)+\frac{\beta}{2}(V(\phi)-V_0)^2,
\end{equation}
we denote the variational derivative by $\frac{\delta \hat{\mathcal{W}}}{\delta \phi}$, where $\beta>0$ is a prescribed parameter. 
To solve the optimal control problem $(\ref{TopOpt})$ and find a (local) minimum, we introduce an Allen-Cahn type of gradient flow \cite{JLXZ,Shen2010,JCP2010PF} 
\begin{equation}\label{GC}\left\{
\begin{aligned}
    &\frac{\partial \phi}{\partial t} = -\frac{\delta \hat{\mathcal{W}}}{\delta \phi},\quad {\rm in}\ \Omega, \\
    &\frac{\partial \phi}{\partial \bm n}=0,\quad\quad\quad  {\rm on}\ \Gamma_1; \ \ \phi=0,\quad {\rm on}\ \Gamma_2;\ \phi=1,\quad {\rm on}\ \Gamma_{\rm in}, \\
    &\phi( \cdot , 0) = \phi_0,\quad\, \,  {\rm in}\ \Omega,
    \end{aligned}\right.
\end{equation}
where $\phi_0:\Omega\rightarrow [0,1]$ is a prescribed initial phase field function. Also, the variational derivative of $\hat{\mathcal{W}}$ defined in \eqref{modiEner} is given by
\begin{equation*}
\frac{\delta \hat{\mathcal{W}}}{\delta \phi}=-\kappa \Delta\phi + \frac{1}{\kappa}\omega^\prime(\phi)+j^\prime(x,\bm c(\phi))+\beta (V(\phi)-V_0),
\end{equation*}
where $\omega^\prime(\phi)=\frac{1}{2}\phi(\phi-1)(2\phi-1)$. Here, the variational derivative of the objective denotes
\begin{equation*}
\begin{aligned}
 &\langle j^\prime(x,\bm c(\phi)), \theta\rangle \\
 &:=-\sum_{i=1}^2 \langle D^\prime(\phi), \theta\rangle(\nabla c_i + z_i c_i \nabla \psi-\alpha_0 c_i \nabla\phi)\cdot \nabla s_i - \langle\epsilon^\prime(\phi), \theta\rangle \nabla\psi\cdot \nabla \zeta+\sum_{i=1}^2D(\phi)\alpha_0 c_i \nabla \theta \cdot \nabla s_i,   
\end{aligned}
\end{equation*}
where $(c_1, c_2, \psi)\in H_{c,1}^1(\Omega)\times H_{c,2}^1(\Omega)\times H^1_g(\Omega)$ and $(s_1,s_2,\zeta)\in [H_{d,0}^1(\Omega)]^3$ are the solutions of \eqref{weakPNP} and \eqref{adjweak} for the given $\phi( x ,t)$, respectively.

\section{Numerical methods}

\subsection{Finite-element discretization of state and adjoint}
We consider a family of quasi-uniform triangulations/tetrahedralizations $\{ \mathcal{T}_h\}_{h >0}$ of $\Omega$, where each $\mathcal{T}_h$ partitions polyhedral $\Omega$ into triangles (2D) and tetrahedrons (3D) such that $\overline{\Omega} = \bigcup _{K \in\mathcal{T} _h} \overline{K}$. The mesh size is defined as $ h:= \max_{K\in\mathcal{T}_h} h_K$, where $h_K$ denotes the diameter of any $K \in \mathcal{T}_h$. 
We introduce the finite-dimensional subspace $W_h=\{q_h\in H^1(\Omega)\cap C^0(\overline{\Omega}): \,q_h|_K\in \mathbb{P}_1\ \forall K\in\mathcal{T}_h\}$, consisting of continuous piecewise linear polynomials on $\mathcal{T}_h$. 

Denote $W_{g,h}:=\{q_h\in W_h: q_h = g\ {\rm on}\ \Gamma_{\rm in}\cup\Gamma_2\}$ for discrete potential $\psi_h$, $\textbf{W}_h:= [W_h]^2$, and $\textbf{W}_{\bm \rho,h}:=\{\bm \rho_h\in \textbf{W}_h :  \rho_{i,h} =  \rho_i^\infty\ {\rm on}\ \Gamma_{\rm in}\cup \Gamma_2,\ i=1,2 \}$ for Slotboom variables where $\bm \rho_h = [\rho_{1,h},\rho_{2,h}]^{\rm T}$. Introduce $W_{0,h}:=\{ w_h\in W_h: w_h=0\ {\rm on}\ \Gamma_{\rm in}\cup\Gamma_{2} \}$ and denote $\textbf{W}_{0,h}:=[W_{0,h}]^2$. 
Then the discrete PNP system reads: Find $(\bm \rho_h,\psi_h)\in \textbf{W}_{\bm \rho,h} \times W_{g,h}$ such that for $i=1,2$
\begin{equation}\label{PNPdis}\left\{
\begin{aligned}
&\int_{\Omega} D(\phi_h) \exp{(\alpha_0\phi_h-z_i\psi_h)}\nabla \rho_{i,h}\cdot \nabla v_{i,h} \dx =0 \quad&& \forall \bm v_h\in \textbf{W}_{0,h}, \\
&\int_{\Omega} \epsilon(\phi_h) \nabla \psi_h \cdot \nabla \zeta_h \dx = \int_{\Omega} \bigg(\sum_{i=1}^2 z_i \rho_{i,h} \exp{(\alpha_0\phi_h- z_i \psi_h)}\bigg) \zeta_h \dx \quad && \forall \zeta_h\in W_{0,h},
\end{aligned}\right.
\end{equation}
where $\bm v_h=[v_{1,h}, v_{2,h}]^{\rm T}$ is a vectorial test function. To solve the nonlinear system \eqref{PNPdis} efficiently, we use the well-known Gummel fixed-point scheme \cite{Gummel1964} that features good convergence properties with respect to the initial guess. Its convergence property in continuous space has been discussed in \cite[page 332]{Markowich1986}.  The fixed-point iterative scheme alternatively solves the Poisson--Boltzmann-type equation for the electric potential and a self-adjoint continuity system for the ionic concentrations. The Poisson--Boltzmann-type equation can be efficiently solved using the Newton’s iterations that can converge locally at a quadratic rate.

In electric double layers next to the electrode, sharp boundary layers could develop due to the relatively small Debye length in electroneutral scenarios. Singular coefficients $\exp{(\alpha_0\phi_h- z_i\psi_h)}$ in \eqref{PNPdis} can cause severe numerical instability due to their sharp variations between elements. To stabilize, the discrete form of the continuity equation in \eqref{PNPdis} is approximated by the inverse-average techniques~\cite{Brezzi1989,averageJCP2022} as
\begin{equation}\label{InvAve}
\begin{aligned}
&\sum_{K\in\mathcal{T}_h}\int_K D(\phi_h) \exp{(\alpha_0 \phi_h-z_i\psi_h)}\nabla \rho_{i,h}\cdot \nabla v_{i,h} \dx\\
&\quad\approx \sum_{K\in\mathcal{T}_h} E(\alpha_0 \phi_h-z_i\psi_h)_K \int_K D(\phi_h) \nabla \rho_{i,h}\cdot \nabla v_{i,h} \dx,
\end{aligned}
\end{equation}
and
\begin{equation*}
\begin{aligned}
\sum_{K\in\mathcal{T}_h}\int_{K} \bigg(\sum_{i=1}^2 z_i \rho_{i,h} \exp{(\alpha_0\phi_h- z_i \psi_h)}\bigg) \zeta_h \dx
\approx\sum_{K\in\mathcal{T}_h}\sum_{i=1}^2E(\alpha_0\phi_h-z_i\psi_h)_K\int_{K}  z_i \rho_{i,h}  \zeta_h \dx
\end{aligned}
\end{equation*}
where 
\begin{equation*}
E(\alpha_0\phi_h-z_i\psi_h)_K = \bigg(\frac{1}{\vert K \vert}\int_K \exp{(-\alpha_0\phi_h+z_i\psi_h)}\dx \bigg)^{-1}.
\end{equation*}
If the mesh $\mathcal{T}_h$ satisfies certain quality conditions, the matrix associated with (\ref{InvAve}) is an M-matrix \cite{Brezzi1989}, which brings good performance on  mass conservation and non-negativity. Interested readers are referred to our previous work~\cite{LZZIJNME2025} for more details.

Moreover, the finite element discretization of the adjoint problem \eqref{adjweak} reads: Find $\bm s_h=[s_{1,h},s_{2,h}]^{\rm T} \in \textbf{W}_{0,h}$ and $\zeta_h \in W_{0,h}$ such that for $i=1,2$
\begin{equation}\label{Adjdis}\left\{
\begin{aligned}
&\int_{\Omega} D(\phi_h) \nabla s_{i,h}\cdot(\nabla v_{i,h}+z_i v_{i,h}\nabla \psi_h- \alpha_0 v_{i,h} \nabla \phi_h) + z_i v_{i,h} -z_i\zeta_h v_{i,h} \dx =0, \quad \forall \bm v_h\in \textbf{W}_{0,h}, \\
&\int_{\Omega} \epsilon(\phi_h) \nabla \zeta_h \cdot \nabla \xi_h \dx+\sum_{i=1}^2\int_{\Omega} D(\phi_h) z_i c_{i,h}\nabla s_{i,h}\cdot \nabla \xi_h \dx=0,\quad \forall \xi_h\in W_{0,h}.
\end{aligned}\right.
\end{equation}
\subsection{Optimization algorithm}
For a terminal time $T>0$, we simply consider a uniform time partition $0=t_0 < t_1<\cdots < t_n<t_{n+1}<\cdots< t_{\tilde{N}}=T$ with a time step size \(\nu := T/\tilde{N}\) for \(\tilde{N} \in \mathbb{N}\). Denote by semi-discrete approximations $\phi^{n+1}\approx \phi( x , \nu_{n+1})$ and $\phi^n \approx \phi( x , \nu_n)$. To solve the topology optimization problem (\ref{TopOpt}), we propose a stabilized semi-implicit time discretization scheme
\begin{equation}\label{ACstabScheme}
\begin{aligned}
&\frac{\phi^{n+1}-\phi^n}{\nu} - \kappa \Delta \phi^{n+1}+\Lambda_1(\phi^{n+1} - \phi^n)-\Lambda_2 \Delta(\phi^{n+1}- \phi^n) \\
&\quad=-\frac{1}{2\kappa}\phi^n(\phi^n-1)(2\phi^n-1) -\beta \big(V(\phi^n) - V_0\big) +j^\prime(x,\bm c^n),
\end{aligned}
\end{equation}
where $(c_1^n,c_2^n,\psi^n):=(c_1(\phi^n),c_2(\phi^n),\psi(\phi^n))\in H_{c,1}^1(\Omega)\times H_{c,2}^1(\Omega)\times H^1_g(\Omega)$ are the solutions of \eqref{weakPNP} for the prescribed $\phi^n\in H^1(\Omega), 0\leq\phi^n \leq 1\ {\rm a.e.}\ {\rm in}\ \Omega$. To achieve efficient time stepping with the cost of a linear solver at each time step, the nonlinear terms are treated explicitly and stabilization terms with parameters \(\Lambda_1, \Lambda_2 > 0\) are introduced to facilitate stability; cf.~\cite{Shen2010,Shen2019}. 

The phase field function may exceed the range $[0,1]$. As a remedy, we use a projected gradient descent method (see, e.g., \cite{LiFu} therein). After obtaining \(\phi^{n+1}\) via \eqref{ACstabScheme}, we project it onto \([0,1]\):
$\phi^{n+1}( x )\leftarrow \mathcal{P}(\phi^{n+1}( x ))$,
where $\mathcal{P}(\phi( x )):=\min( \max(\phi( x ),0),1)$.

We present a topology optimization algorithm based on the gradient flow scheme \eqref{ACstabScheme}. To save computational cost, the PNP system \eqref{PNPdis} and adjoint equations \eqref{Adjdis} are updated once after the gradient flow scheme advances 10 time steps. The computational details is summarized in Algorithm~\ref{alg1}.

\begin{algorithm}[htb]
\SetAlgoLined
\caption{Topology optimization of supercapacitor electrode.
}\label{alg1}
\KwData{Given outer iteration number $N$, and target volume \(V_0\);\\
\qquad \, \,  Initialize phase field function $\phi_0$ and set $n=0$}
\While {$n \leq N$}{
\emph{Step 1}: Solve the PNP system \eqref{PNPdis} to obtain potential and concentrations \\
\emph{Step 2}: Solve the adjoint problem \eqref{Adjdis} to obtain adjoint variables \\
\emph{Step 3}: Update the phase field function via gradient flow scheme \eqref{ACstabScheme}\\
$n \leftarrow n+1$\\
}
\end{algorithm}

\section{Numerical examples}
Numerical simulations are performed using FreeFem++ \cite{Hecht} on a personal computer equipped with a 12th Gen Intel(R) Core(TM) i7-12700 (2.10 GHz) and 16.0 GB. Unless specified otherwise, we take the following parameters in our numerical simulations:
$\epsilon_0 = 0.01$, $\epsilon_m = 5$, $D_0 = 0.5$, $D_m = 0.01$, $g\big\vert_{\Gamma_2} = -0.5$, and $c_1^{\infty}\vert_{\Gamma_{\rm in}} = c_2^{\infty}\vert_{\Gamma_{\rm in}} = 0.5, L=1, p =2, \alpha_0=1$.


\subsection{Maximize total charge storage}
\begin{figure}[htbp]
\centering
\includegraphics[width=3.5 in]{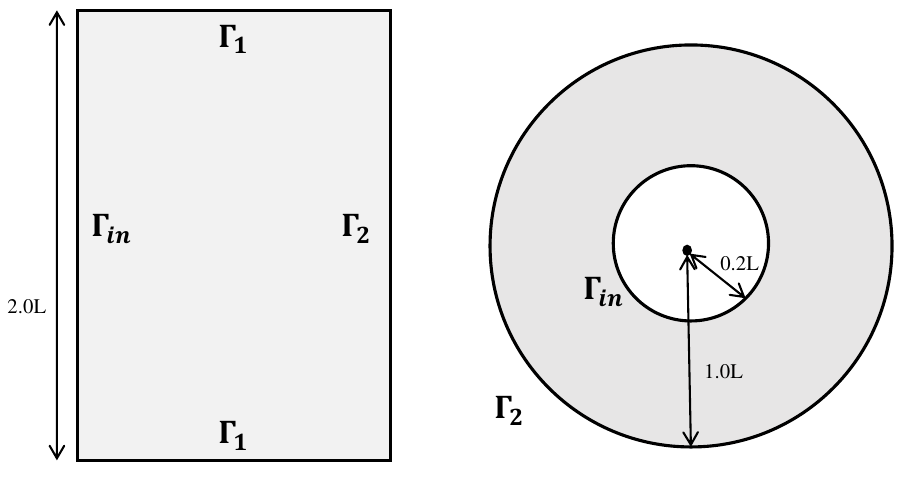}
\caption{A rectangular design domain for Example 1 (left) and an annulus design domain for Example 2 (right).}
\label{shapeOpt2D} 
\end{figure}
The initial phase field function in 2D is given by \(\phi_0 = 0.5 + 0.5\cos(m\pi x_1)\cos(m\pi x_2)\) for Example 1 and Example 2 below, where $m$ is a positive integer.

\begin{figure}[htbp]
\begin{minipage}[b]{0.166\textwidth}
\centering
    \includegraphics[width=0.87 in]{./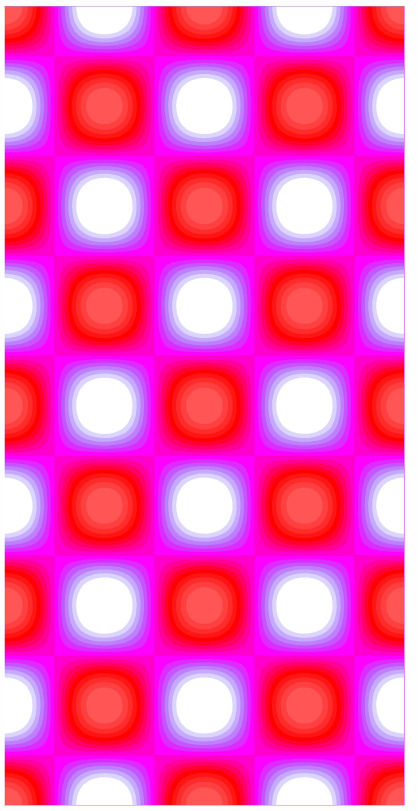}
\end{minipage}
\begin{minipage}[b]{0.166\textwidth}
\centering
    \includegraphics[width=0.87 in]{./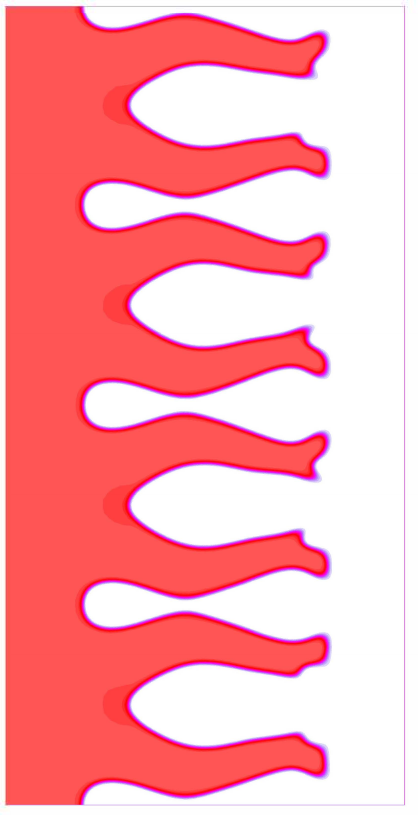}
\end{minipage}
\begin{minipage}[b]{0.166\textwidth}
\centering
    \includegraphics[width=0.87 in]{./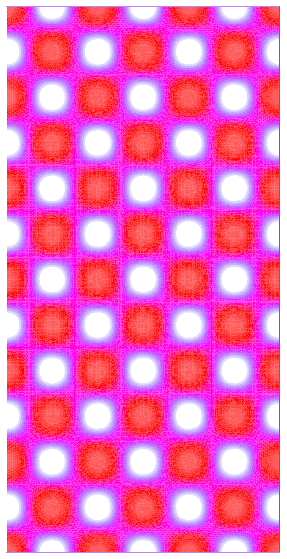}
\end{minipage}
\begin{minipage}[b]{0.166\textwidth}
\centering
    \includegraphics[width=0.9 in]{./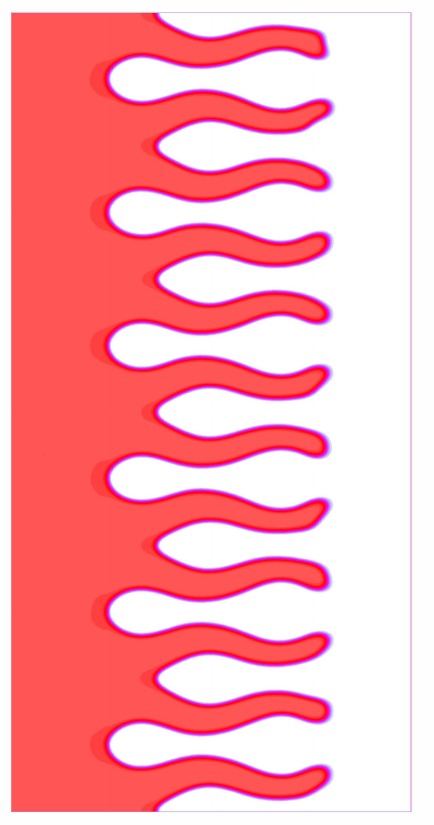}
\end{minipage}
\begin{minipage}[b]{0.166\textwidth}
\centering
    \includegraphics[width=0.87 in]{./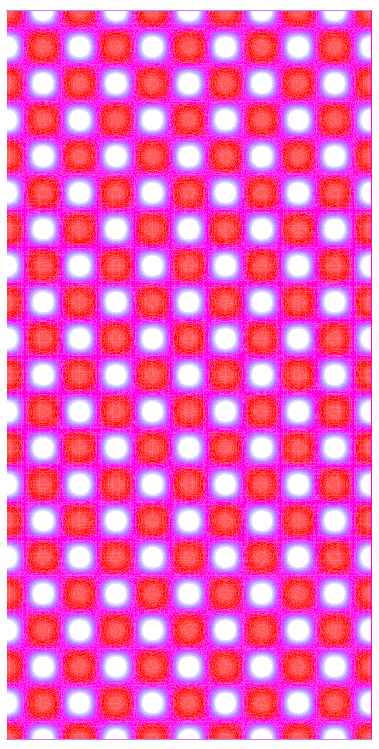}
\end{minipage}
\begin{minipage}[b]{0.166\textwidth}
\centering
    \includegraphics[width=0.87 in]{./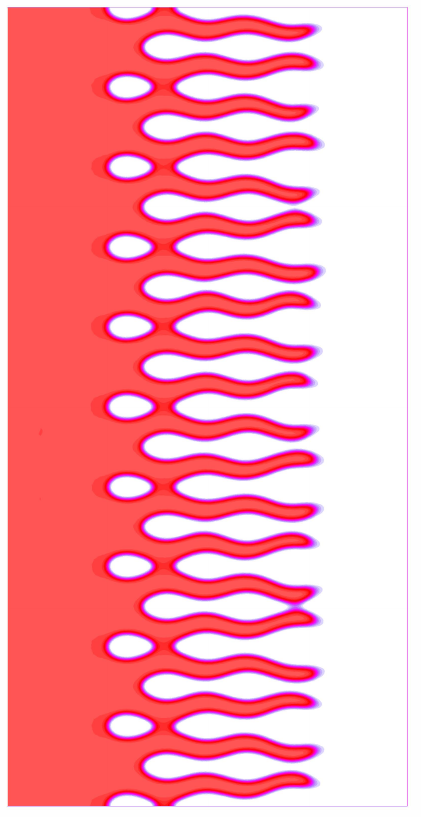}
\end{minipage}
\caption{Initial and optimal design for Example 2 with \(m = 4, 6, 10\) from left to right.}
\label{exp1cas2} 
\end{figure}

\textbf{Example 1}: Consider a design domain \(\Omega = (0, 1) \times (0, 2)\) as shown in the left panel of Fig.~\ref{shapeOpt2D}. Parameters are set as: \( \nu = 2\times 10^{-4}\), \(V_0 = 1\), \(\kappa = 10^{-3}\), \(\Lambda_1 = 1\), \(\Lambda_2 = 10^{-2}\), and \(\beta = 500\).

\begin{figure}[htbp]
\begin{minipage}[b]{0.33\textwidth}
\centering
    \includegraphics[width=1.4 in]{./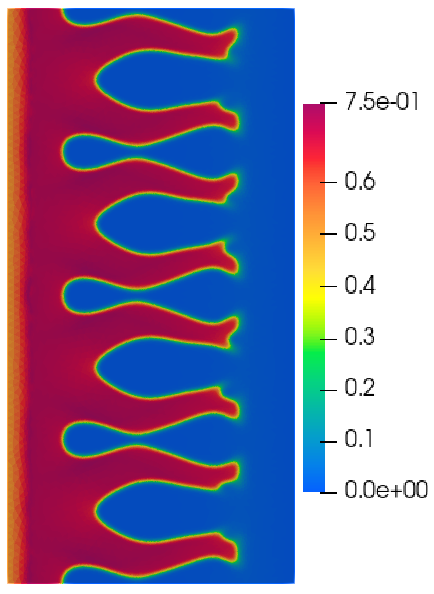}
\end{minipage}
\begin{minipage}[b]{0.33\textwidth}
\centering
    \includegraphics[width=1.475 in]{./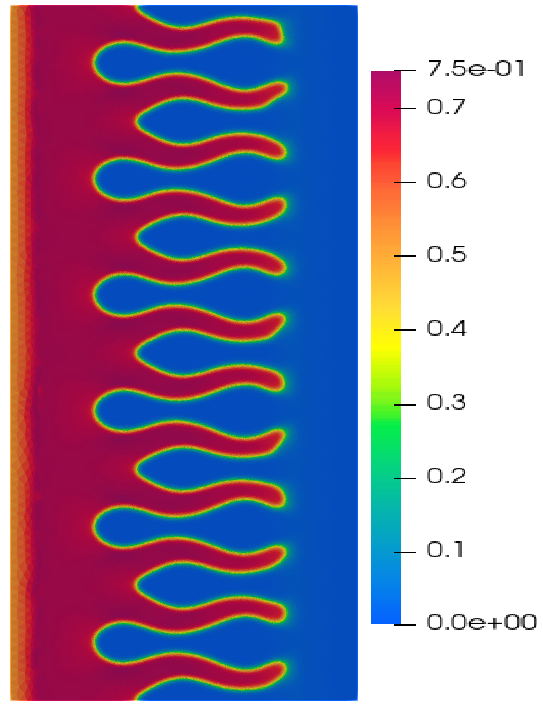}
\end{minipage}
\begin{minipage}[b]{0.33\textwidth}
\centering
    \includegraphics[width=1.475 in]{./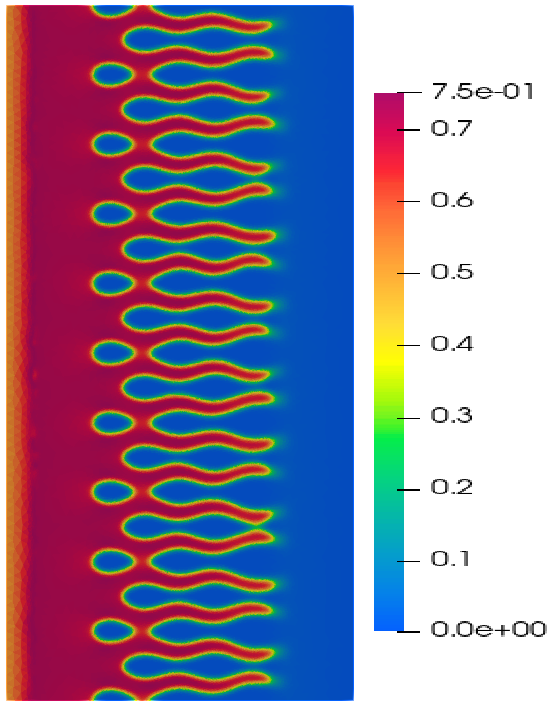}
\end{minipage}
\\
\begin{minipage}[b]{0.33\textwidth}
\centering
    \includegraphics[width=1.4 in]{./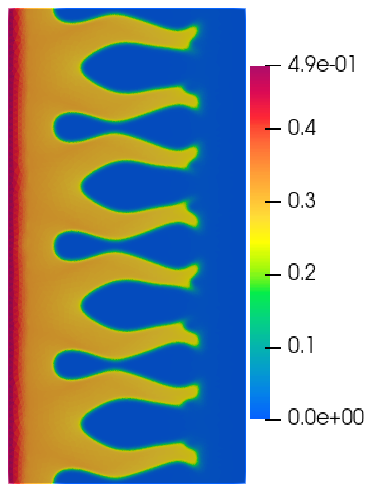}
\end{minipage}
\begin{minipage}[b]{0.33\textwidth}
\centering
    \includegraphics[width=1.4 in]{./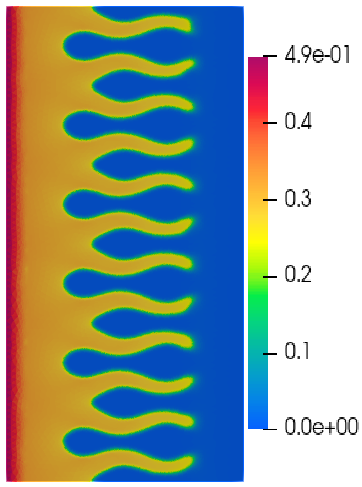}
\end{minipage}
\begin{minipage}[b]{0.33\textwidth}
\centering
    \includegraphics[width=1.45 in]{./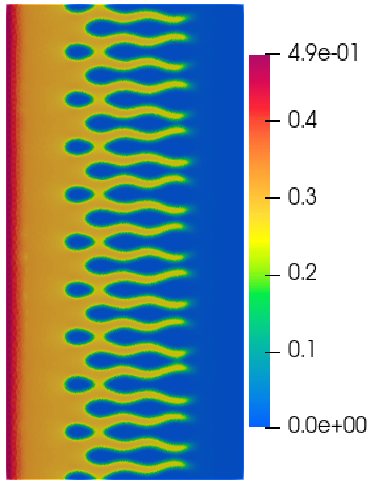}
\end{minipage}
\caption{Concentrations of cations (upper row) and anions (lower row) for optimized configurations in Example 1 ($m=4, 6, 10$ from left to right).}
\label{exp1c1cas2} 
\end{figure}

\begin{figure}[htbp]
\begin{minipage}[b]{0.5\textwidth}
\centering
    \includegraphics[width=2.3 in]{./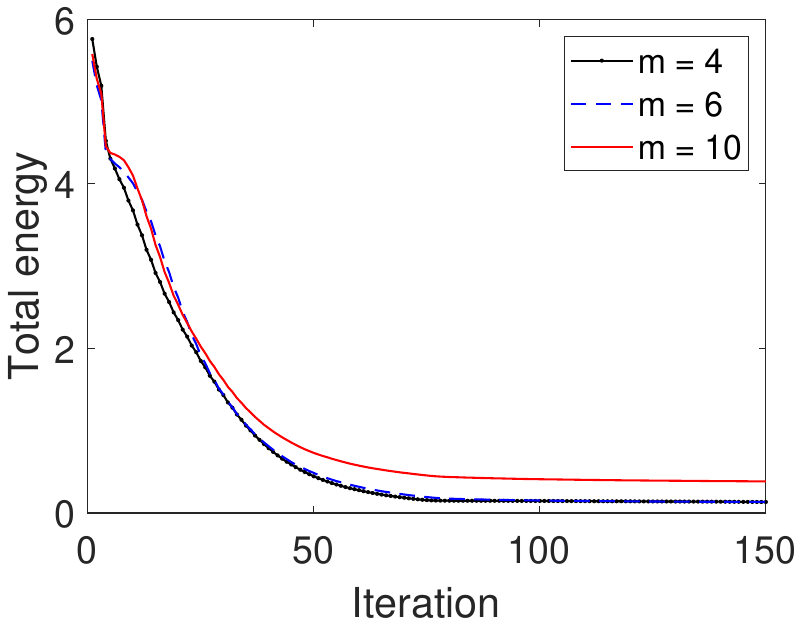}
\end{minipage}
\begin{minipage}[b]{0.5\textwidth}
\centering
    \includegraphics[width=2.3 in]{./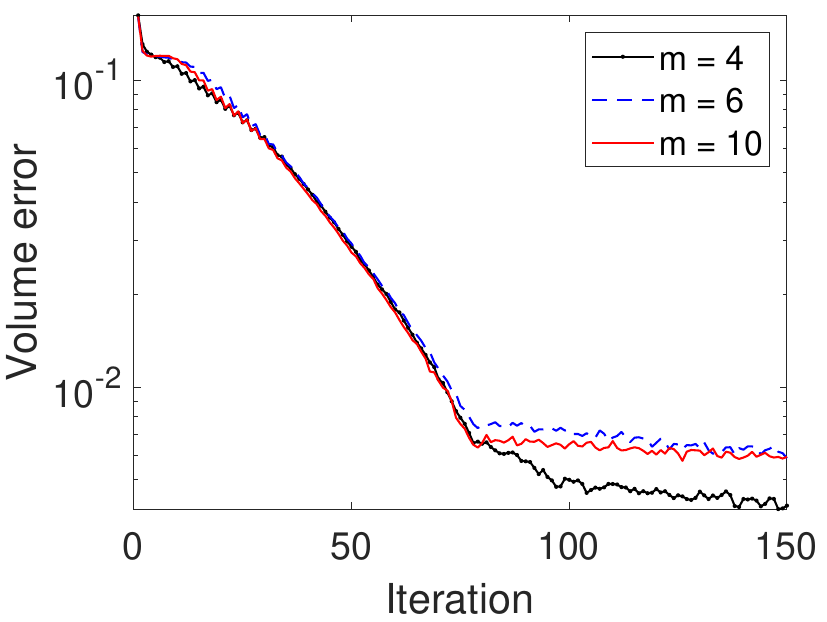}
\end{minipage}
\caption{Convergence histories of energy (left) and volume error (right) for Example 1.}
\label{exp1case2BObjVol} 
\end{figure}

The initial and optimal configurations for different $m=4, 6, 10$ are shown in Fig.~\ref{exp1cas2}, which indicates that more ``holes" in the initial phase field lead to more complex, delicate final structures with larger interfacial area. Fig. \ref{exp1c1cas2} displays concentration distributions of both ion species in optimal configurations. Clearly, the anions are repelled from the negatively charged electrode and cations are attracted into the porous-like structure.  As positive ions accumulate near the interface, wavy structures with larger contact areas emerge to maximize ion storage. Fig. \ref{exp1case2BObjVol} shows the convergence histories of the objective and total energy with a volume error lower than $0.01$. It can be found that the supercapacitor storage achieves roughly 4 times as much as the initial configuration after optimization.  This is consistent with the well-known fact that larger supercapacitor storage can be achieved if larger interfacial area is designed, since energy is mainly stored in the electric double layers at the interface.

\begin{figure}[htbp]
\begin{minipage}[b]{0.33\textwidth}
    \includegraphics[width=1.6 in]{./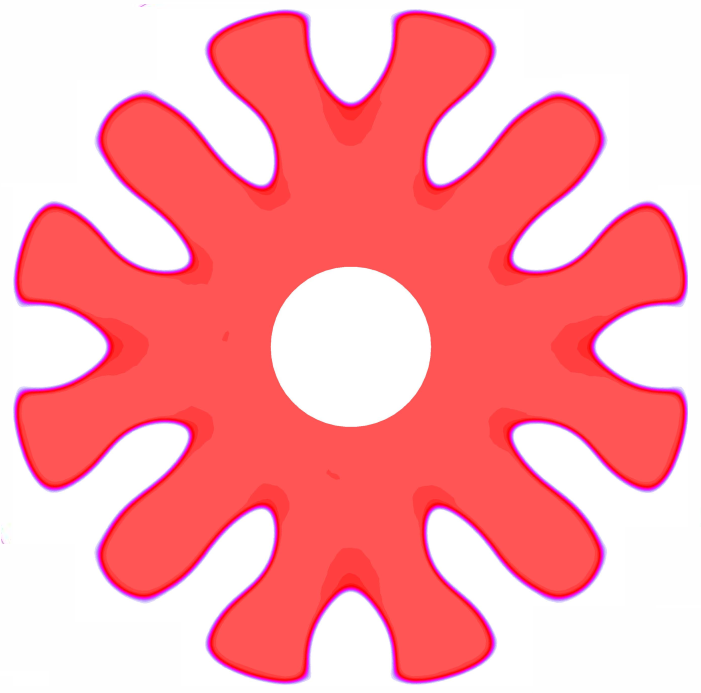}
\end{minipage}
\begin{minipage}[b]{0.33\textwidth}
    \includegraphics[width=1.6 in]{./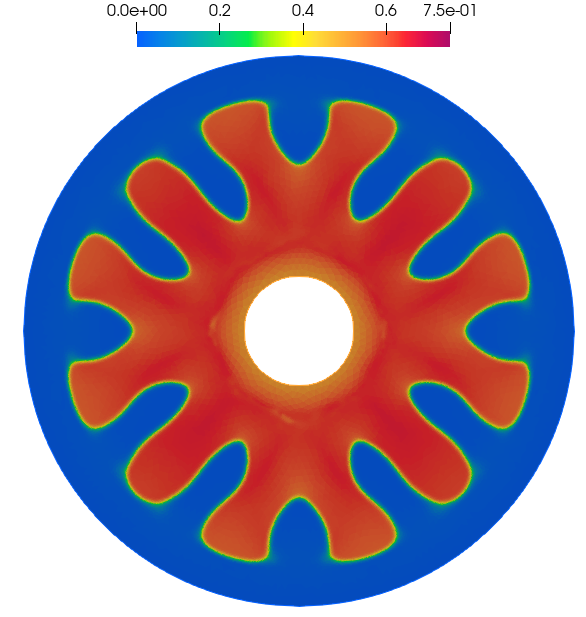}
\end{minipage}
\begin{minipage}[b]{0.33\textwidth}
    \includegraphics[width=1.6 in]{./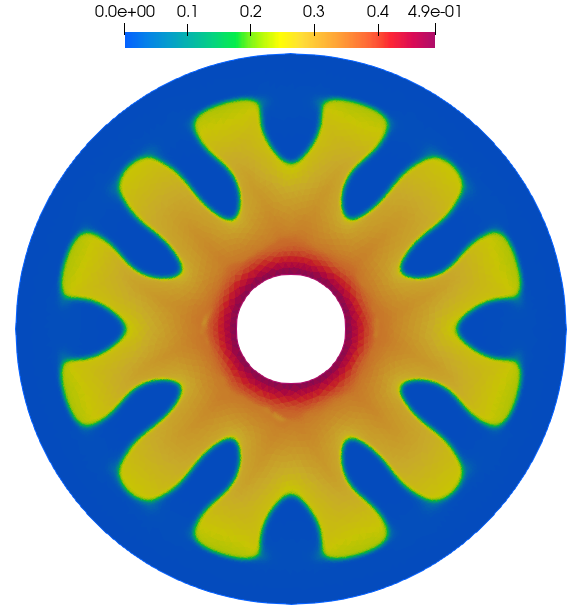}
\end{minipage}
\\
\begin{minipage}[b]{0.33\textwidth}
    \includegraphics[width=1.6 in]{./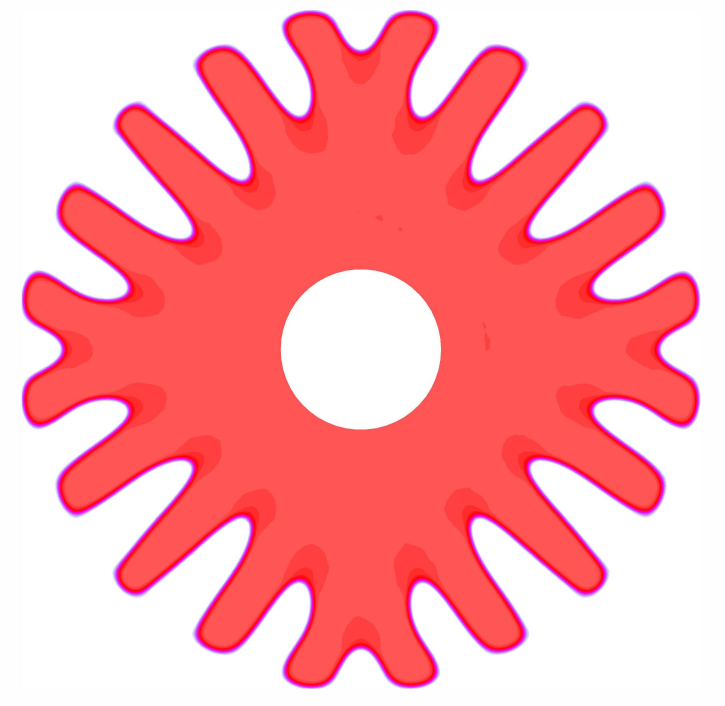}
\end{minipage}
\begin{minipage}[b]{0.33\textwidth}
    \includegraphics[width=1.6 in]{./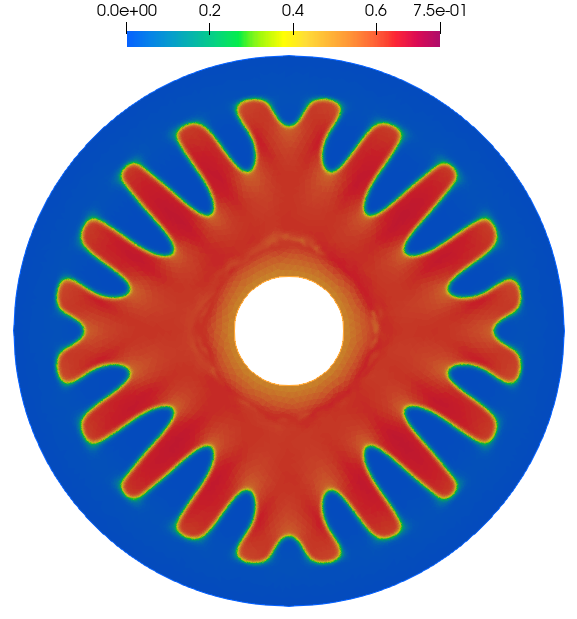}
\end{minipage}
\begin{minipage}[b]{0.33\textwidth}
    \includegraphics[width=1.6 in]{./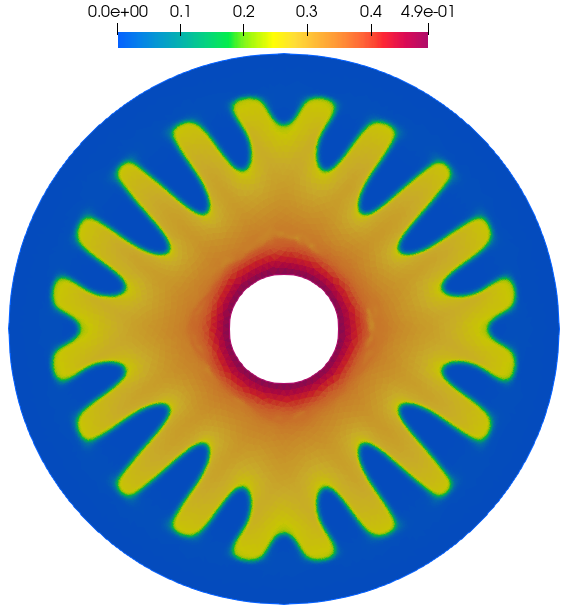}
\end{minipage}
\caption{The optimal configuration (column 1), and the corresponding concentrations of cations (column 2) and anions (column 3) with different initial phase field functions ($m=4, 6$ from upper row to lower row) in Example 2.}
\label{exp2Opt} 
\end{figure}

\begin{figure}[htbp]
\begin{minipage}[b]{0.5\textwidth}
\centering
    \includegraphics[width=2.5 in]{./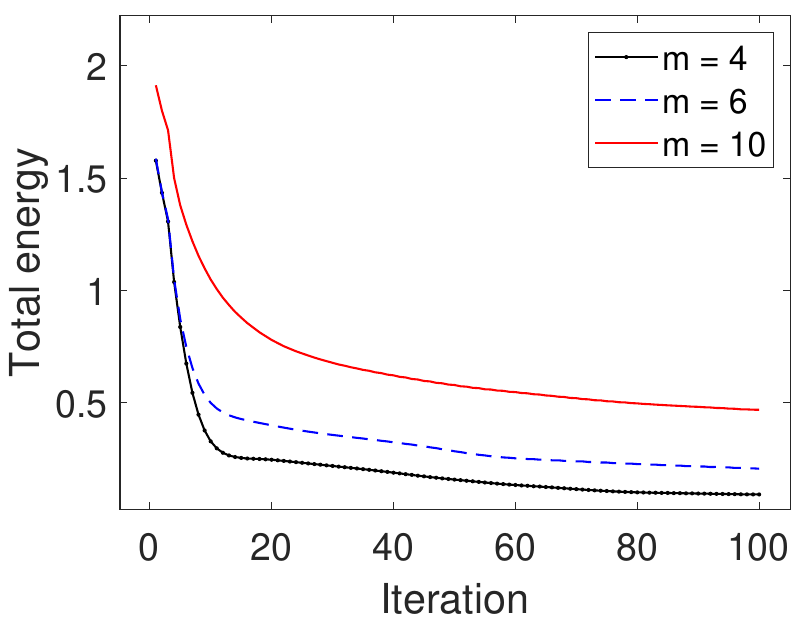}
\end{minipage}
\begin{minipage}[b]{0.5\textwidth}
\centering
    \includegraphics[width=2.5 in]{./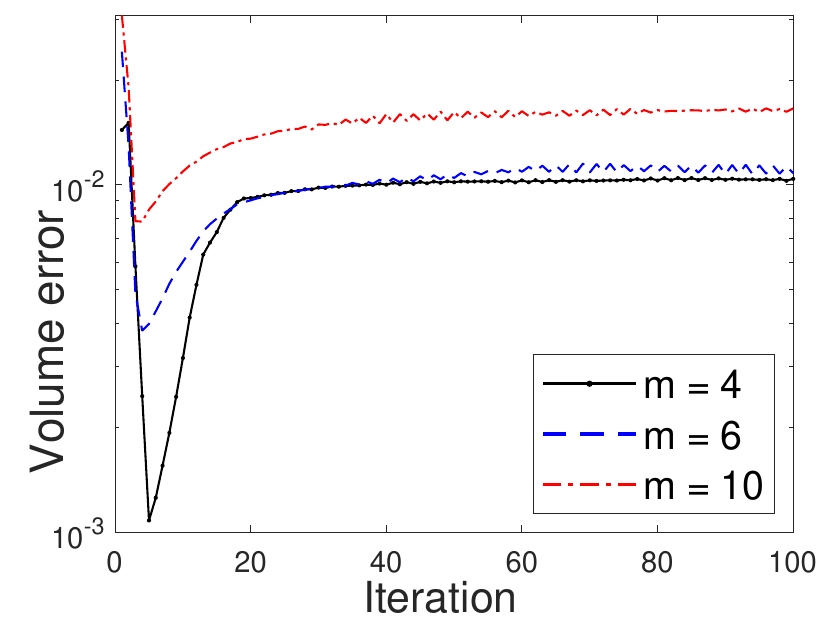}
\end{minipage}
\caption{The convergence histories of total energy (left) and volume error (right) for Example 2.}
\label{exp2ObjVol} 
\end{figure}

\textbf{Example 2}: In this example, we consider topology optimization of cylindrical supercapacitors~\cite{Mei2017,Simon2008}.  As shown in Fig.~\ref{shapeOpt2D}, the design domain $\Omega:=\{(x_1,x_2)\vert 0.04< x_1^2+x_2^2<1 \}$ is given by an annulus centered at the origin \((0,0)\), with the inner and outer ring radii being \(0.2\) and \(1\), respectively.  The electrode being placed at the outer ring. Simulations are performed with the parameters: \(\nu = 10^{-3}\), \(\kappa = 10^{-3}\), \(\Lambda_1 = 1\), \(\Lambda_2 = 10^{-2}\), \(\beta = 500\), \(V_0 = 0.5|\Omega|\).

The first column of Fig.~\ref{exp2Opt} presents the optimized configurations with petal-shaped structures for initial phase field functions with \(m = 4, 6\). Again, our topology optimization gives optimized structures that favor enhancing the electrode-electrolyte interface area.  Columns 2 and 3 of Fig. \ref{exp2Opt} display concentration distributions of cations and anions corresponding to the optimized configuration, respectively. Such plots again demonstrate that the cations penetrate into tips and form electric double layers to store energy.  Convergence histories of total energy and volume errors shown in Fig.~\ref{exp2ObjVol} demonstrate the gradient flow scheme converge remains effective in multiple connected domains and gives convergence with a small volume error.

\begin{figure}[htbp]
\begin{minipage}[b]{0.33\textwidth}
\centering
    \includegraphics[width=1.45 in]{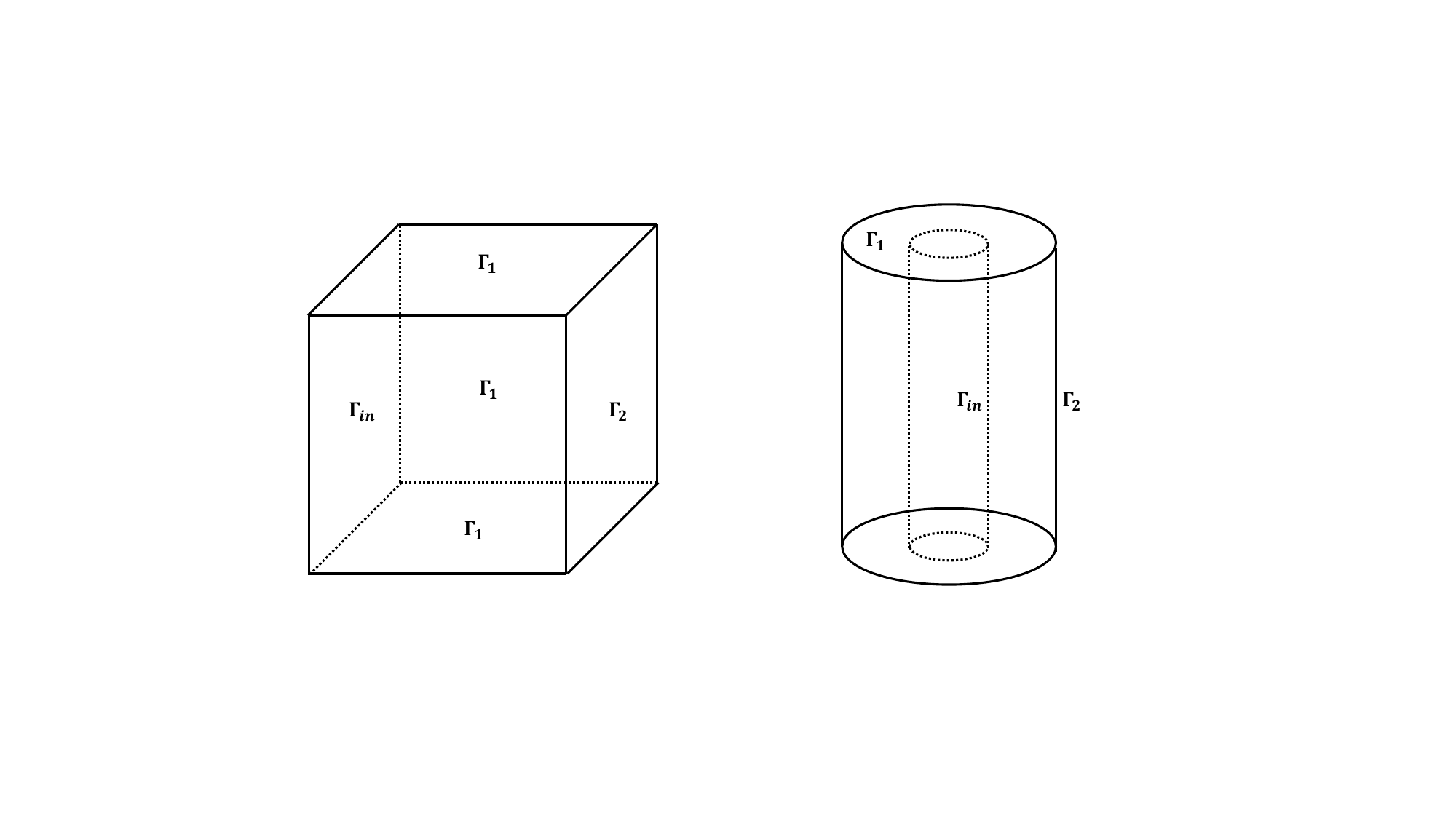}
\end{minipage}
\begin{minipage}[b]{0.33\textwidth}
\centering
    \includegraphics[width=1.3 in]{./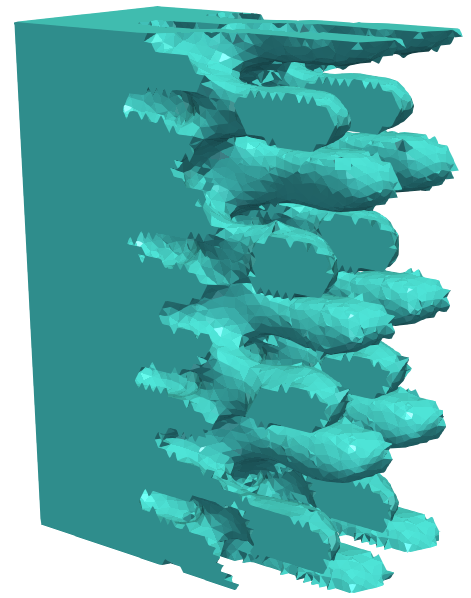}
\end{minipage}
\begin{minipage}[b]{0.33\textwidth}
\centering
    \includegraphics[width=1.9 in]{./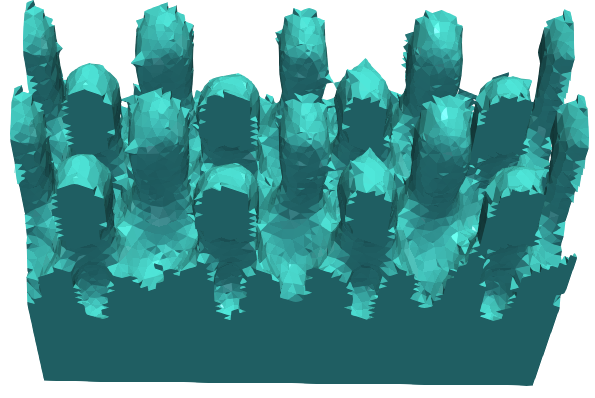}
\end{minipage}
\caption{Design domain (left), and optimized electrolyte regions (middle and right) for Case 1 in Example 3.}
\label{Exp3OptShape} 
\end{figure}

\begin{figure}[htbp]
\begin{minipage}[b]{0.33\textwidth}
\centering
    \includegraphics[width=1.2 in]{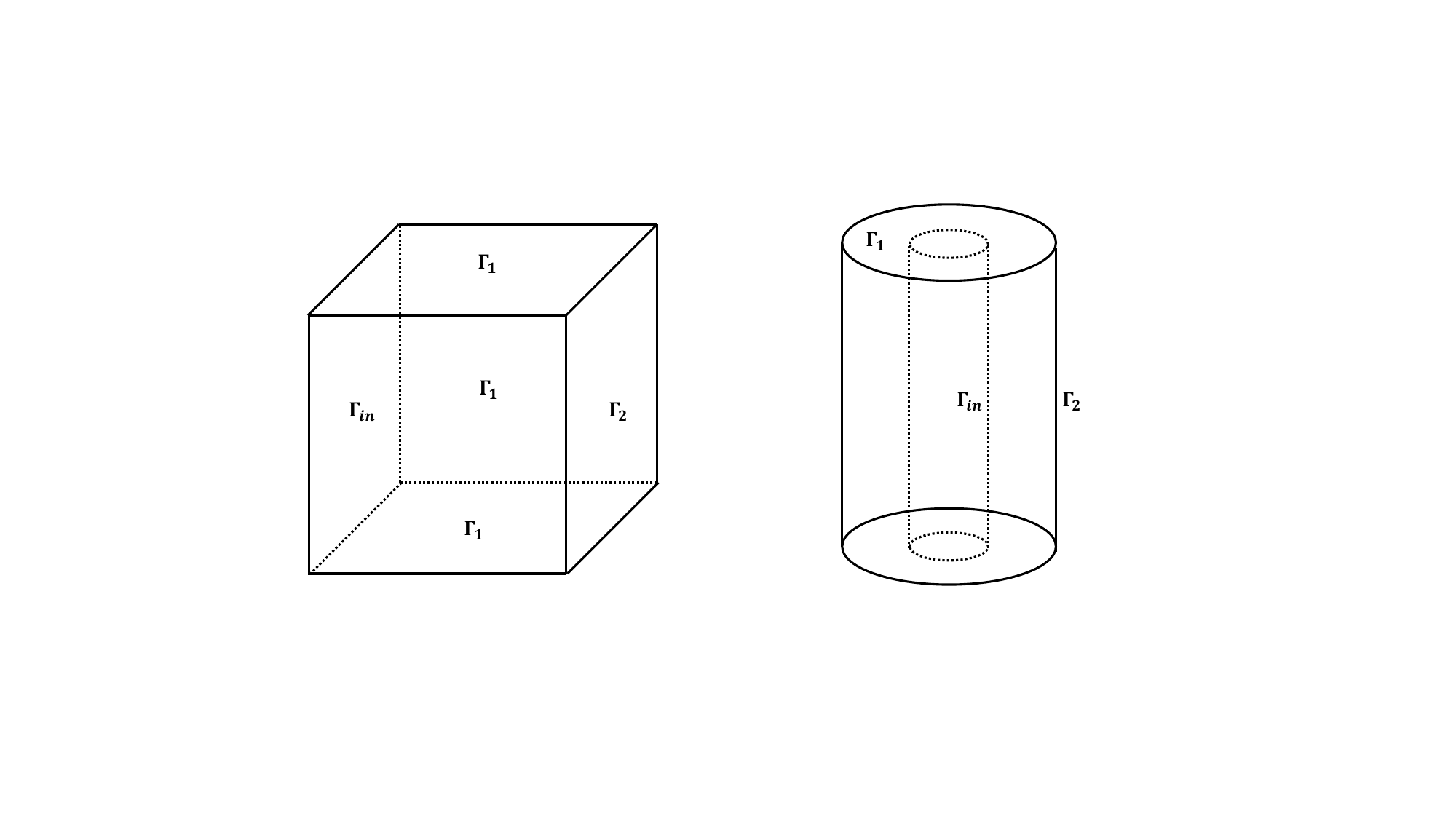}
\end{minipage}
\begin{minipage}[b]{0.33\textwidth}
\centering
    \includegraphics[width=1.2 in]{./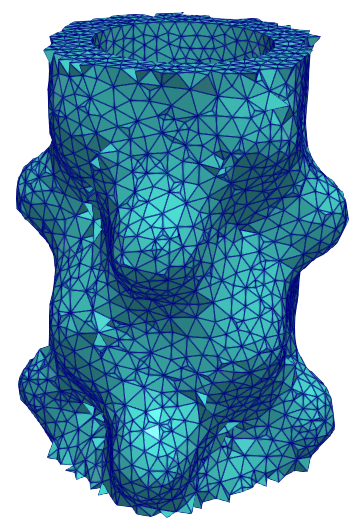}
\end{minipage}
\begin{minipage}[b]{0.34\textwidth}
\centering
    \includegraphics[width=1.8 in]{./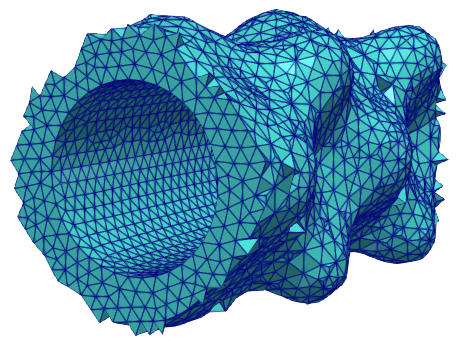}
\end{minipage}
\caption{Design domain (left), and optimized electrolyte regions (middle and right) for Case 2 in Example 3.}
\label{Exp3OptShapecase2} 
\end{figure}

\begin{figure}[htbp]
\begin{minipage}[b]{0.5\textwidth}
\centering
    \includegraphics[width=2.5 in]{./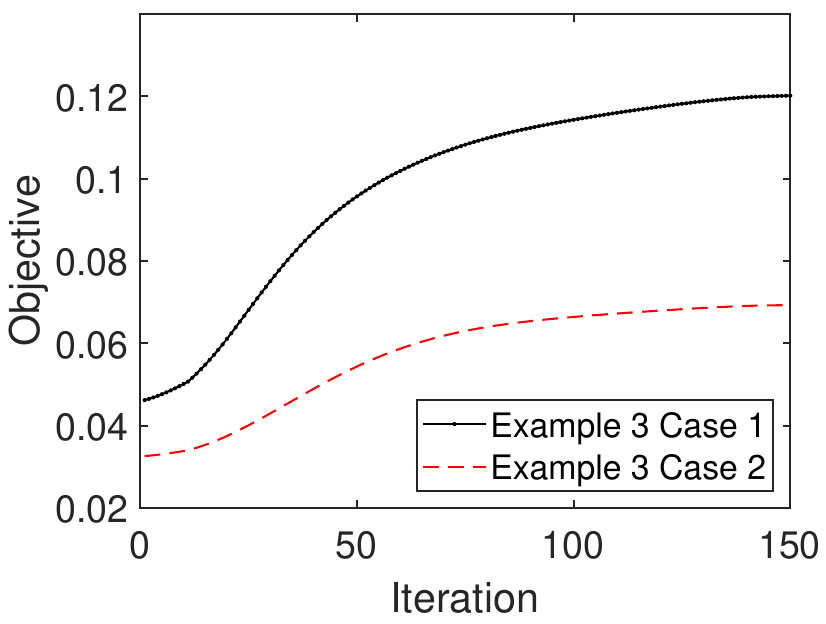}
\end{minipage}
\begin{minipage}[b]{0.5\textwidth}
\centering
    \includegraphics[width=2.5 in]{./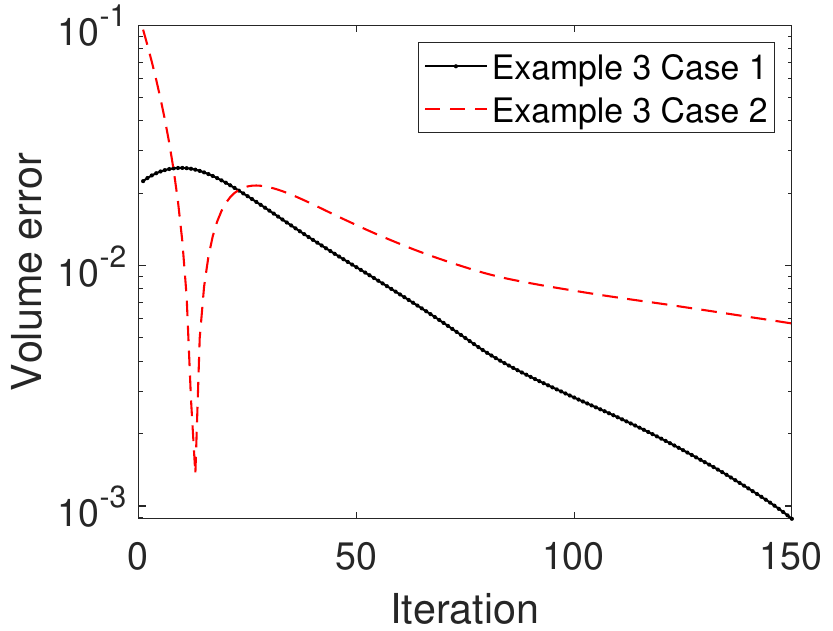}
\end{minipage}
\caption{Convergence histories of the objective (left) and volume error (right) for Example 3.}
\label{exp3BObjEnergycase2} 
\end{figure}

\textbf{Example 3}: In this example, we consider topology optimization of maximizing the charge storage of supercapacitors in 3D. The following two cases consider the cubic and cylindrical shapes of electrode structures. 

\emph{Case 1}: Consider a design domain \(\Omega = (0,1) \times (0,1) \times (0,0.4)\), as shown in the left panel of Fig.~\ref{Exp3OptShape}. Denote by \(\Gamma_{\rm in}\) the left face and \(\Gamma_2\) the right face of the cuboid. The computational domain is discretized by $169,011$ tetrahedral elements. The parameters of the PNP system \eqref{PNPdis} are chosen the same as in the Example 2. For the gradient flow, we set \(\nu = 10^{-4}\), \(\kappa = 10^{-3}\), \(V_0 = 0.6|\Omega|\), \(\Lambda_1 = 1\), \(\Lambda_2 = 0.5\), \(\beta = 2\times 10^{3}\). The initial phase field function is given by \(\phi_0(x_1, x_2, x_3) = 0.5 + 0.5\cos(4\pi x_1)\cos(8\pi x_2)\cos(10\pi x_3).\) The optimal electrolyte region computed by the Algorithm \ref{alg1} is visualized in Fig.~\ref{Exp3OptShape} from two different view angles. A porous structure is observed with exceptional capacity to store net charges. The convergence histories of the objective, as depicted in Fig. \ref{exp3BObjEnergycase2}, demonstrate that our topology optimization algorithm remains effective in 3D design of the electrode structure, with more than $15$ times enhancement  in charge storage. 

\emph{Case 2}: The design domain is set as a cylindrical annulus \(\Omega = \Omega_c \times [0,1]\), where \(\Omega_c = \{(x_1,x_2) \mid x_1^2 + x_2^2 \geq 0.04,\ x_1^2 + x_2^2 \leq 0.25\}\); cf.~ the left panel of Fig. \ref{Exp3OptShapecase2}. The computational mesh comprises $74,604$ tetrahedral elements and $14,837$ vertices. The parameters of the PNP system and phase field model remain the same as in the Case 1, except for \(V_0 = 0.3|\Omega|\) and the initial phase field function \(\phi_0(x_1, x_2, x_3) = 0.5 + 0.5\cos(4\pi x_1)\cos(4\pi x_2)\cos(4\pi x_3).\) The optimal electrolyte region is presented from two different view angles in Fig. \ref{Exp3OptShapecase2}. Again, a wavy eletrode-eletrolyte interface can be found to maximize the total charge storage. As shown in Fig. \ref{exp3BObjEnergycase2},  the objective converges robustly with a final volume error below $0.01$, further validating the effectiveness of proposed algorithm for storage maximization of supercapacitor in 3D domains. 



\section{Conclusion}
This work has proposed a topology optimization model based on the phase field method for maximizing charge storage in supercapacitors.  The model has been expressed in the form of a optimal control problem constrained by a modified steady-state Poisson--Nernst--Planck system, which is newly developed to describe ionic electrodiffusion in electrolyte-accessible domain during topology optimization.  The existence of minimizers to the optimal control problem has been theoretically established by using the direct method in the calculus of variation. Sensitivity analysis has been conducted to derive variational derivatives and corresponding adjoint equations. A gradient flow formulation, implemented with a stabilized semi-implicit scheme, has been developed to solve the resulting topology optimization problem. Ample 2D/3D numerical experiments have been performed to explore the optimal design of electrode structure. Various porous configurations have been found to have  high charge storage, validating the proposed topology optimization model and corresponding algorithm.
 


\end{document}